\newcommand{\tensor}{\otimes}
\newcommand{\Tor}{\operatorname{Tor}}
\renewcommand{\H}{\operatorname{Hilb}^2\hspace{-.05in}X}
\newcommand{\wtp}{\widetilde{\P}^n}
\newcommand{\wts}{\widetilde{\Sigma}}
\newcommand{\ses}[3]{0\rightarrow#1\rightarrow#2
   \rightarrow#3\rightarrow0}
\newcommand{\PP}{\ensuremath{\mathbb{P}}}
\newcommand{\E}{{\mathcal E}}
\newcommand{\I}{{\mathcal I}}
\renewcommand{\P}{{\mathbb{P}}}
\newcommand{\NS}{{N_p^{\Sigma}}}
\renewenvironment{proof}{\par \medskip \noindent
{\sc Proof:}}{}
\newtheorem{thm}{Theorem}[section]   
\newtheorem{cor}[thm]{Corollary}     
\newtheorem{lemma}[thm]{Lemma}         
\newtheorem{prop}[thm]{Proposition}  
\theoremstyle{definition} 
\newtheorem{conj}[thm]{Conjecture}        
\newtheorem{remark}[thm]{Remark}   
\newtheorem{ex}[thm]{Example}        
\newtheorem{rmk}[thm]{Remark}
\newtheorem{notation}[thm]{Notation}
\newtheorem{notterm}[thm]{Notation and Terminology}
\begin{document}

\pagenumbering{arabic}

\title[Arithmetic of the secant variety]{Arithmetic properties of the first secant variety to a projective variety}




\author[Peter Vermeire]{Peter Vermeire}

\address{Department of Mathematics, 214 Pearce, Central Michigan
University, Mount Pleasant MI 48859}

\email{verme1pj@cmich.edu}

\subjclass[2010]{14N05, 13D02, 14H99}

\date{\today}

\begin{abstract} 
Under an explicit positivity condition, we show the first secant variety of a linearly normal smooth variety is projectively normal, give results on the regularity of the ideal of the secant variety, and give conditions on the variety that are equivalent to the secant variety being arithmetically Cohen-Macaulay.  Under this same condition, we then show that if $X$ satisfies $N_{p+2\dim(X)}$, then the secant variety satisfies $N_{3,p}$.
\end{abstract}

\maketitle

\section{Introduction}

We work throughout over an algebraically closed field of characteristic zero.  Secant varieties are a classical subject, though the majority of work done involves determining the dimensions of secant varieties to well-known varieties.  Perhaps the two most well-known results in this direction are the solution by Alexander and Hirschowitz (completed in \cite{ah}) of the Waring problem for homogeneous polynomials and the classification of the Severi varieties by Zak \cite{zak}.  

More recently there has been great interest, e.g. related to algebraic statistics and algebraic complexity, in determining the equations defining secant varieties (e.g. \cite{ar}, \cite{bcg}, \cite{BGL}, \cite{CGG05a}, \cite{CGG05b}, \cite{CGG07}, \cite{CGG}, \cite{unex}, \cite{cs}, \cite{gss}, \cite{kanev}, \cite{Lan06}, \cite{Lan08}, \cite{LM08}, \cite{LW07}, \cite{LW08}, \cite{LO},   \cite{LO2}, \cite{OB}, \cite{sidsul}, \cite{ss}).  In this work, we use the detailed geometric information concerning secant varieties developed by Bertram \cite{bertram}, Thaddeus \cite{thaddeus}, and the author \cite{vermeireflip1} to study not just the equations defining secant varieties, but the syzygies among those equations as well.  This program was carried out for smooth curves in \cite{vermeirecurves}.

Under an explicit positivity condition, we show that the first secant variety $\Sigma$ to a smooth projective variety $X^d\subset\P^n$ is projectively normal (Theorem~\ref{projnor}) and that $\I_{\Sigma}$ is $(2d+3)$-regular (Corollary~\ref{firstreg}), directly extending results of \cite{vermeiresecreg} for smooth curves.  We also obtain simple conditions on the intrinsic geometry of $X$ which are equivalent to the condition that $\Sigma$ is arithmetically Cohen-Macaulay (Theorem~\ref{acm}), extending results of \cite{sidver} for curves.  We then show (Theorem~\ref{forgeneral}) that if $X^d$ satisfies $N_{p+2d}$, then $\Sigma$ satisfies $N_{3,p}$ (see Corollares~\ref{list} and~\ref{list2}  for a list of specific examples).


\begin{notterm}\label{maps}
Recall that an embedding $X^d\subset\P^n$ is \textbf{\boldmath $r$-very ample} if every subscheme of length $r+1$ spans a $\P^{r}\subset\P^n$, and that \textbf{\boldmath $X$ satisfies $N_{k,p}$} if the ideal of $X$ is generated in degree $k$ and the syzygies among the generators are linear for $p-1$ steps \cite{EGHP}.  It is immediate that if an embedding is $3$-very ample then $\operatorname{dim}(\Sigma)=2d+1$.

Under the hypotheses that $X\subset \P^n$ is a smooth variety such that the embedding is $3$-very ample and satisfies $N_{2,2}$, the reader should keep in mind the following morphisms \cite{vermeireflip1}
\begin{center}
{\begin{minipage}{1.5in}
\diagram 
& & \H \\
& Z \cong \operatorname{Bl}_{\Delta}(X \times X) \urto^{d=\varphi|_Z} \dlto^{\pi_2} \dto^{\pi_1 = \pi|_Z} \rto^{\hspace{.5in}i} & \wts \uto_{\varphi} \dto^{\pi}\\
X & X  \rto  &\Sigma   
\enddiagram
\end{minipage}}
\end{center}
where 
\begin{itemize}
\item $\pi$ is the blow up of $\Sigma$ along $X$
\item  $i$ is the inclusion of the exceptional divisor of the blow-up
\item $d$ is the double cover, $\pi_i$ are the projections
\item $\varphi$ is the morphism induced by the linear system $|2H-E|$ which gives $\wts$ the structure of a $\P^1$-bundle over $\H$; note in particular that $\wts$ is smooth.  
\end{itemize}
Note that we make extensive use of the rank $2$ vector bundle $\E_L=\varphi_*\mathcal{O}(H)=d_*\left(L\boxtimes\mathcal{O}\right)$, and note that for $i\geq1$, $R^i\pi_*\mathcal{O}_{\wts}=H^i(X,\mathcal{O}_X)\tensor\mathcal{O}_X$ (this is shown in \cite[Proposition 9]{vermeiresecreg} for curves, but the same proof works in the general case).
\end{notterm}

The positivity condition we will invoke is:

\begin{notation}\label{starstar}
For $p\geq 0$, we say $X\subset\P^n$ satisfies $N^{\Sigma}_p$ if
\begin{enumerate}
\item the embedding of $X$ is $3$-very ample and satisfies $N_{2,p}$; and
\item $H^i(\wts,\mathcal{O}_{\wts}(bH-E))=0$ for $i,b\geq1$.
\end{enumerate} 
\end{notation}

We devote the next section to the study of $N^{\Sigma}_p$.

\begin{remark}
Note that in Notation and Terminology~\ref{maps}, the morphism $\varphi$ induced by $|2H-E|$ embeds $\H\subset\P^s=\P(\Gamma(\I_X(2))$.  Writing $\mathcal{O}_{\H}(1)=\varphi^*\mathcal{O}_{\P^s}(1)$, it will be shown in the proof of Proposition~\ref{vanishonsec} that if $H^i(\H,\mathcal{O}(r))=H^i(\H,\E_L(r))=0$ for $i,r\geq1$, then the vanishing condition in Notation~\ref{starstar} is satisfied.  Thus the vanishing condition is a reasonable positivity condition.
\qed
\end{remark}

\section{Condition $\NS$}

For curves, verification of $\NS$ is straightforward.
\begin{prop}\label{starstarforcurves}
Let $X\subset\P^n$ be a smooth curve satisfying $N_{p}$, $p\geq2$, with $L=\mathcal{O}_X(1)$ non-special.  Then $L$ satisfies $\NS$.
\end{prop}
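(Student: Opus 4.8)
The statement has two clauses to verify: clause (1) of $\NS$ ($3$-very ampleness and $N_{2,p}$) and clause (2), the vanishing $H^i(\wts,\O(bH-E))=0$ for $i,b\geq1$. I would dispatch clause (1) using only the syzygy hypothesis. The property $N_{2,p}$ is exactly classical $N_p$ for a projectively normal, quadratically generated curve, so it is the hypothesis itself. For $3$-very ampleness I would invoke the geometric content of $N_{2,p}$ from \cite{EGHP}: because $p\geq2$ the embedding satisfies $N_{2,2}$, and a variety satisfying $N_{2,2}$ has all of its length-$4$ subschemes in linearly general position, i.e. spanning a $\P^3$, which is precisely $3$-very ampleness. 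This is the only place the hypothesis $p\geq2$ (rather than $p\geq1$) is used, and it does not use non-speciality.

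All the work is in clause (2), and this is where non-speciality enters. For a curve $\Delta\subset X\times X$ is a divisor, so $Z=X\times X$, the exceptional divisor $E\cong Z$ is the tautological double cover $d\colon X\times X\to\H=\operatorname{Sym}^2 X$, and $\E_L=d_*(L\boxtimes\O)$. Writing $bH-E=(b-2)H+\varphi^*\O_\H(1)$ and using the $\P^1$-bundle identities $\varphi_*\O(mH)=\operatorname{Sym}^m\E_L$ and $R^1\varphi_*\O(mH)=0$ for $m\geq-1$, the projection formula together with Leray gives, for $i\geq1$,
\[
H^i(\wts,\O(bH-E))=\begin{cases}0,&b=1,\\ H^i\big(\H,\ \operatorname{Sym}^{b-2}\E_L\otimes\O_\H(1)\big),&b\geq2.\end{cases}
\]
So it suffices to prove $H^i(\H,\operatorname{Sym}^a\E_L\otimes\O_\H(1))=0$ for $a\geq0$ and $i\geq1$ (equivalently, by the remark after Notation~\ref{starstar} and Proposition~\ref{vanishonsec}, just the cases $a=0,1$, namely $H^i(\H,\O_\H(r))=H^i(\H,\E_L(r))=0$ for $i,r\geq1$).

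To compute on $\H$ I pull back along $d$: in characteristic zero $H^i(\H,\F)=H^i(X\times X,d^*\F)^{\Z/2}$, so it is enough to kill the cohomology on $X\times X$. Here $d^*\O_\H(1)=(L\boxtimes L)(-\Delta)$, and $d^*\E_L$ — hence every $\operatorname{Sym}^a$ of it, tensored with $d^*\O_\H(1)$ — carries a filtration whose graded pieces are line bundles $(L^{a_1}\boxtimes L^{a_2})(-c\Delta)$ with $a_1,a_2\geq1$. For each such piece I run the diagonal filtration $0\to\G(-k\Delta)\to\G(-(k-1)\Delta)\to\G(-(k-1)\Delta)|_\Delta\to0$, using $\O(-\Delta)|_\Delta\cong K_X$ and $(L^{a_1}\boxtimes L^{a_2})|_\Delta\cong L^{a_1+a_2}$, with base term $H^\bullet(L^{a_1}\boxtimes L^{a_2})$ computed by Künneth. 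The vanishing then rests on two inputs: (a) $H^1(X,L^{a_1})=H^1(X,L^{a_2})=0$ and $H^1(X,L^{a_1+a_2}\otimes K_X^{\,j})=0$ for $j\geq0$, all of which hold because $L$ is non-special and these bundles have negative-degree Serre dual; and (b) surjectivity of the restriction maps $H^0\big((L^{a_1}\boxtimes L^{a_2})(-(k-1)\Delta)\big)\to H^0(\Delta,\,L^{a_1+a_2}\otimes K_X^{\,k-1})$, whose $k=1$ instance is the multiplication map $H^0(L^{a_1})\otimes H^0(L^{a_2})\to H^0(L^{a_1+a_2})$, surjective by the projective normality contained in $N_p$.

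The main obstacle is clause (2), and inside it the higher-order ($k\geq2$) maps in (b): these assert that sections vanishing to order $k-1$ along the diagonal surject onto the associated jets, and it is here that projective normality (the normal generation packaged in $N_p$) does real work beyond mere non-speciality; one establishes them by a short induction on $k$ that feeds the $k=1$ surjectivity and the $H^1$-vanishings of (a) back into the diagonal filtration. A secondary, purely bookkeeping, difficulty is to track the $\Z/2$-action through the ramified cover $d$ — in particular the sign with which the involution acts on $\O(-\Delta)|_\Delta\cong K_X$ — so as to identify the invariant part correctly; this is routine and is carried out for curves in the manner of \cite{vermeiresecreg}.
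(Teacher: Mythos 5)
Your handling of clause (1) via \cite{EGHP} is fine (the paper leaves this implicit), and your reduction of clause (2) to cohomology on $\H$ and then to line bundles $(L^{a_1}\boxtimes L^{a_2})(-c\Delta)$ on $X\times X$ is fine as bookkeeping, modulo one correction: $d^*\O_{\H}(1)=L\boxtimes L(-2\Delta)$, not $(L\boxtimes L)(-\Delta)$ --- the latter is $d^*\wedge^2\E_L$ by Lemma~\ref{det} --- which shifts every $c$ up and raises the order of the jet maps you need. The genuine gap is your step (b) for $k\geq2$. The surjectivity of $H^0\bigl((L^{a_1}\boxtimes L^{a_2})(-(k-1)\Delta)\bigr)\to H^0(L^{a_1+a_2}\otimes K_X^{k-1})$ is precisely surjectivity of a higher-order Gauss--Wahl map (for $k=2$, $a_1=a_2=1$, the classical Wahl map $\wedge^2H^0(L)\to H^0(K_XL^2)$), and your proposed derivation of it is circular: in $0\to\G(-k\Delta)\to\G(-(k-1)\Delta)\to\G(-(k-1)\Delta)|_\Delta\to0$, once $H^1(\G(-(k-1)\Delta))=0$ is known, surjectivity of the stage-$k$ jet map is \emph{equivalent} to $H^1(\G(-k\Delta))=0$, which is the statement you are inducting toward; nothing propagates the $k=1$ surjectivity (multiplication, i.e.\ $N_0$) upward. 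These surjectivities are exactly the extra positivity the paper isolates: they are the hypotheses of Proposition~\ref{vanishonsec}, and Remark~\ref{gaussianetal} identifies them as Gauss--Wahl conditions in the sense of \cite{wahl}. If they followed formally from projective normality plus non-speciality, Proposition~\ref{starstarforcurves} would be a special case of Proposition~\ref{vanishonsec} and would need no separate proof. The paper's actual argument avoids your route entirely: projective normality of $X$ gives $H^i(\wts,\O_{\wts}(bH-E))=H^{i+1}(\wtp,\O_{\wtp}(bH-E)\otimes\I_{\wts})=H^{i+1}(\P^n,\I_{\Sigma}(b))$ (\cite[2.4(6)]{sidver}, cf.\ Lemma~\ref{change}), and for $i\geq1$ --- cohomological degree at least $2$ --- the arguments of \cite{vermeiresecreg} and \cite{sidver} give this vanishing from $N_2$ and non-speciality alone, the bound $\deg L\geq 2g+3$ in those papers being needed only for $H^1(\P^n,\I_{\Sigma}(b))$, which never arises here.

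A second, compounding, problem is that your $\Z/2$-descent does not commute with your filtration: the subbundle $\O\boxtimes L(-\Delta)\subset d^*\E_L$ is not invariant under the factor-swapping involution, so $\operatorname{Sym}^a d^*\E_L\otimes d^*\O_{\H}(1)$ admits no equivariant filtration with those graded pieces, and invariants cannot be computed piece by piece. You are therefore forced to prove the full vanishing $H^1(X\times X,(L^{a_1}\boxtimes L^{a_2})(-c\Delta))=0$, which is strictly stronger than the invariant-part vanishing $H^i(\H,-)=0$ that the proposition requires --- and it is the full statement that is equivalent to the Gauss--Wahl surjectivities above. The sign you defer as routine is in fact decisive: the involution acts on $\O(-\Delta)|_{\Delta}\cong K_X$ by $-1$, so for $b=2$ the Wahl cokernel lands in the anti-invariant summand $H^1(\H,\O_{\H}(1)\otimes M)$, while the invariant part $H^1(\H,\O_{\H}(1))$ dies using only surjectivity of $\operatorname{Sym}^2H^0(L)\to H^0(L^2)$ and non-speciality; but this symmetric/antisymmetric splitting is exactly what the non-equivariance of the $\operatorname{Sym}^a$ filtration denies you for $b\geq3$. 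So the proposal as written does not close, and the missing ingredient is not bookkeeping but the Gauss--Wahl input that the paper's detour through $\I_{\Sigma}$ on $\P^n$ is designed to avoid.
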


\begin{proof}
We need to show $H^i(\wts,\mathcal{O}_{\wts}(bH-E))=0$ for $i,b\geq1$.

Because $X$ is projectively normal we have $H^i(\wtp,\mathcal{O}_{\wtp}(bH-E))=0$ for $i,b\geq1$.  Thus $H^i(\wts,\mathcal{O}_{\wts}(bH-E))=H^{i+1}(\wtp,\mathcal{O}_{\wtp}(bH-E)\tensor\I_{\wts})$.  By \cite[2.4(6)]{sidver}, we know that $H^{i+1}(\wtp,\mathcal{O}_{\wtp}(bH-E)\tensor\I_{\wts})=H^{i+1}(\P^n,\I_{\Sigma}(b))$ (see also Lemma~\ref{change} where this is shown to be true in all dimensions).  

Now, for $i\geq1$, the arguments in \cite{vermeiresecreg} and in \cite{sidver} go through under the stated hypotheses to give $H^{i+1}(\P^n,\I_{\Sigma}(b))=0$ for $b\geq1$.  The extra hypothesis used in those papers (namely, that $\deg(L)\geq 2g+3$) is needed only to show $H^{1}(\P^n,\I_{\Sigma}(b))=0$ for $b\geq1$.
\qed
\end{proof}

Verifying condition $\NS$ in the general case takes somewhat more work, but the end results are reasonable.
We first need a computation which will be used in both Proposition~\ref{vanishonsec} and in Theorem~\ref{forgeneral}.
\begin{lemma}\label{det}
Let $X$ be a smooth variety embedded by a $3$-very ample line bundle $L$ satisfying $N_{2,2}$.  Then $d^*\wedge^2\E_L=L\boxtimes L(-E_{\Delta})$.
\end{lemma}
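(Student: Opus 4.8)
The plan is to exploit the fact that $d\colon Z\to\H$ is a finite flat double cover and that $\E_L$ is by definition the pushforward $d_*(L\boxtimes\mathcal{O})$ of a line bundle along this cover, so that computing $d^*\wedge^2\E_L = d^*\det\bigl(d_*(L\boxtimes\mathcal{O})\bigr)$ reduces to two standard ingredients: the determinant-of-a-pushforward formula and an identification of the ramification divisor of $d$. The hypotheses that $L$ is $3$-very ample and satisfies $N_{2,2}$ enter only to guarantee that the morphisms of Notation and Terminology~\ref{maps} are as described; in particular $\H$ and $Z$ are smooth of the same dimension and $d$ is a finite, hence flat, degree-two cover with deck involution $\iota$ (the lift of the factor swap on $X\times X$).

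First I would recall that for a finite flat degree-two morphism $d\colon Z\to\H$ with involution $\iota$ and any line bundle $M$ on $Z$ one has
\[
\det(d_*M)\homeq\operatorname{Nm}_{Z/\H}(M)\otimes\det(d_*\mathcal{O}_Z),
\]
and consequently, pulling back and using $d^*\operatorname{Nm}_{Z/\H}(M)\homeq M\otimes\iota^*M$,
\[
d^*\det(d_*M)\homeq M\otimes\iota^*M\otimes d^*\det(d_*\mathcal{O}_Z).
\]
(Equivalently, one may read this off the two-step filtration of $d^*d_*M$ whose graded pieces are $M$ and $\iota^*M(-R)$, where $R$ is the ramification divisor, since that filtration forces $d^*\det(d_*M)\homeq M\otimes\iota^*M\otimes\mathcal{O}_Z(-R)$.)

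Next I would identify the three factors for $M=L\boxtimes\mathcal{O}=\pi_1^*L$. Since $Z=\operatorname{Bl}_{\Delta}(X\times X)$ and $\iota$ lifts the swap of the two factors, $\iota^*M\homeq\pi_2^*L$, so $M\otimes\iota^*M\homeq\pi_1^*L\otimes\pi_2^*L=L\boxtimes L$ on $Z$. Writing $d_*\mathcal{O}_Z=\mathcal{O}_{\H}\oplus\mathcal{L}^{-1}$, so that $\det(d_*\mathcal{O}_Z)=\mathcal{L}^{-1}$ with $d^*\mathcal{L}\homeq\mathcal{O}_Z(R)$, it remains only to show $R=E_{\Delta}$, whence $d^*\det(d_*\mathcal{O}_Z)\homeq\mathcal{O}_Z(-E_{\Delta})$ and the three factors combine to give $d^*\wedge^2\E_L\homeq L\boxtimes L(-E_{\Delta})$.

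The crux is therefore this last geometric point, and it is where I expect the only real subtlety to lie. The fixed locus of the swap on $X\times X$ is the diagonal $\Delta\homeq X$, and its differential acts on $N_{\Delta/X\times X}\homeq T_X$ by $-1$ (the swap fixes the diagonal tangent directions and negates the anti-diagonal normal directions). Hence on the blow-up the lifted involution $\iota$ acts trivially on the projectivized normal bundle $E_{\Delta}=\PP(N_{\Delta/X\times X})$, so $E_{\Delta}$ is precisely the fixed divisor of $\iota$ and thus the ramification divisor $R$ of $d$. The main obstacle is getting this action — in particular the sign of $\iota$ on the normal directions — exactly right, since it is what converts the abstract branch data of the cover into the explicit twist by $-E_{\Delta}$; once it is pinned down, the lemma follows by assembling the displays above.
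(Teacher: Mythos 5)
Your proof is correct, but it takes a genuinely different route from the paper's. The paper never works directly with the ramification theory of the finite cover $d$: instead it pushes the sequence $0\to\mathcal{O}_{\wts}(-E)\to\mathcal{O}_{\wts}\to\mathcal{O}_Z\to 0$ down the $\P^1$-bundle $\varphi:\wts\to\H$ to identify $R^1\varphi_*\mathcal{O}_{\wts}(-E)$ with the anti-invariant line bundle $M$ of the double cover (the summand of $d_*\mathcal{O}_Z=\mathcal{O}_{\H}\oplus M$ with $d^*M=\mathcal{O}_Z(-E_{\Delta})$, quoted from \cite[3.10]{sidver}), then applies relative duality \cite[5.1.2]{conrad} together with $\omega_{\wts/\H}=\varphi^*\wedge^2\E_L(-2H)$ to conclude $M^*=\wedge^2\E_L\tensor\mathcal{O}_{\H}(-1)$, and finally restricts to $Z$ using $\mathcal{O}_Z(2H-E)=L\boxtimes L(-2E_{\Delta})$. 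You instead stay entirely on the finite flat degree-two cover $d:Z\to\H$, invoke the standard determinant--norm formula $\det(d_*M)\cong\operatorname{Nm}_{Z/\H}(M)\tensor\det(d_*\mathcal{O}_Z)$ (equivalently, the two-step filtration of $d^*d_*M$ with graded pieces $M$ and $\iota^*M(-R)$), and reduce everything to the identification $R=E_{\Delta}$, which you justify correctly: the swap acts as $-\operatorname{id}$ on $N_{\Delta/X\times X}\cong T_X$, so the lifted involution fixes $E_{\Delta}=\P(N_{\Delta/X\times X})$ pointwise, has no fixed points elsewhere, and in characteristic zero the ramification along the fixed divisor is simple, giving $R=E_{\Delta}$ with coefficient one. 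Your argument is more elementary and in a sense more general: it uses neither Grothendieck duality nor the $\P^1$-bundle structure on $\wts$, and it actually establishes $d^*\det d_*(L\boxtimes\mathcal{O})=L\boxtimes L(-E_{\Delta})$ for an arbitrary line bundle $L$, the $3$-very ampleness and $N_{2,2}$ hypotheses entering only so that $\E_L=d_*(L\boxtimes\mathcal{O})$ is the bundle of Notation and Terminology~\ref{maps}. What the paper's route buys in exchange is that it remains inside the toolkit already assembled for the rest of the paper (\cite[3.10]{sidver}, the formula for $\omega_{\wts/\H}$) and simultaneously computes $R^1\varphi_*\mathcal{O}_{\wts}(-E)=M$ and the splitting $d_*d^*\E_L=\E_L\oplus(\E_L\tensor M)$, identifications that are reused verbatim in the proof of Proposition~\ref{vanishonsec}.
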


\begin{proof}
Consider the sequence on $\wts$:
$$\ses{\mathcal{O}_{\wts}(-E)}{\mathcal{O}_{\wts}}{\mathcal{O}_Z}$$
As $R^0\varphi_*\mathcal{O}_{\wts}(-E)=0$, pushing down to $\H$ we have (\cite[3.10]{sidver})
$$\ses{\mathcal{O}_{\H}}{\mathcal{O}_{\H}\oplus M}{R^1\varphi_*\mathcal{O}_{\wts}(-E)}$$
where $d^*M=\mathcal{O}_{Z}(-E_{\Delta})$.

Thus $R^1\varphi_*\mathcal{O}_{\wts}(-E)=M$.  However, we know by \cite[5.1.2]{conrad} that $$\left(R^1\varphi_*\mathcal{O}_{\wts}(-E)\right)^*=R^0\varphi_*\left(\omega_{\wts/\H}\tensor\mathcal{O}_{\wts}(E)\right)$$ where $\omega_{\wts/\H}=\varphi^*\wedge^2\E_L(-2H)$ \cite[Ex.III.8.4b]{hart}.  Thus we have
$$M^*=\wedge^2\E_L\tensor\mathcal{O}_{\H}(-1)$$
and so $\varphi^*\wedge^2\E_L=\mathcal{O}_{\wts}(2H-E)\tensor\varphi^*M^*$.  Restricting (pulling back) this equality to $Z$ and noting (\cite[3.6]{vermeireidealreg}) that $\mathcal{O}_{Z}(2H-E)=L\boxtimes L(-2E_{\Delta})$, we have $d^*\wedge^2\E_L=L\boxtimes L(-E_{\Delta})$.
\qed
\end{proof}

We now interpret the vanishing condition in the definition of $\NS$ in terms of $X$.

\begin{prop}\label{vanishonsec}
Let $X\subset\P^n$ be a smooth variety embedded by a $3$-very ample line bundle $L$ satisfying $N_{2,2}$ such that $H^i(X\times X,L^{r+s}\boxtimes L^r\tensor \I_{\Delta}^{q})=0$ for $i,r\geq1$, $s\geq0$, $0\leq q\leq2r$.  Then $H^i(\wts,\mathcal{O}_{\wts}(bH-E))=0$ for $i,b\geq1$.
\end{prop}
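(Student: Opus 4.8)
The plan is to push the computation down the $\P^1$-bundle $\varphi$, reduce to a vanishing statement on $\H$, and then transport that statement to $X\times X$ through the double cover $d$ and the blow-down $\rho\colon Z\to X\times X$, where the hypothesis lives.

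First I would rewrite the bundle using $\O_{\wts}(2H-E)=\varphi^*\O_{\H}(1)$, which holds because $\varphi$ is the morphism attached to $|2H-E|$. Thus $\O_{\wts}(bH-E)=\O_{\wts}((b-2)H)\tensor\varphi^*\O_{\H}(1)$, and since $\O_{\wts}(H)$ is the relative hyperplane bundle of $\wts=\P(\E_L)$, the restriction of $\O_{\wts}((b-2)H)$ to a fibre of $\varphi$ has degree $b-2$. For $b=1$ this degree is $-1$, so $R^0\varphi_*=R^1\varphi_*=0$ and $H^i(\wts,\O_{\wts}(H-E))=0$ for all $i$. For $b\geq2$ the fibre degree is nonnegative, so $R^1\varphi_*\O_{\wts}(bH-E)=0$ and, by the projection formula, $\varphi_*\O_{\wts}(bH-E)=\operatorname{Sym}^{b-2}\E_L\tensor\O_{\H}(1)$; the Leray sequence then collapses to $H^i(\wts,\O_{\wts}(bH-E))=H^i(\H,\operatorname{Sym}^{b-2}\E_L(1))$. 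This reduces the proposition to showing $H^i(\H,\operatorname{Sym}^a\E_L(1))=0$ for $i\geq1$ and all $a\geq0$.

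Next I would exploit that $\E_L=d_*(L\boxtimes\O)$ is the push-forward of a line bundle under the double cover $d\colon Z\to\H$, ramified along $E_\Delta$. Writing $\mathcal L=L\boxtimes\O$, the multiplication map gives a surjection $\operatorname{Sym}^a\E_L\twoheadrightarrow d_*(\mathcal L^a)$ whose kernel is, on the complement of the branch locus, $\operatorname{Sym}^{a-2}\E_L$ twisted by the norm of $\mathcal L$; globally this is a sequence $0\to\operatorname{Sym}^{a-2}\E_L\tensor\Lambda\to\operatorname{Sym}^a\E_L\to d_*(\mathcal L^a)\to0$ with $\Lambda$ a line bundle differing from $\wedge^2\E_L$ by a twist supported on the ramification. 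Tensoring by $\O_{\H}(1)$ and inducting on $a$ (base cases $a=0,1$, where one either has a line bundle on $\H$ or a single $d_*$), every graded piece becomes $d_*(\mathcal N)\tensor\O_{\H}(1)=d_*(\mathcal N\tensor d^*\O_{\H}(1))$ for a line bundle $\mathcal N$ on $Z$. Using Lemma~\ref{det}, the projection formula, and $d^*\O_{\H}(1)=\O_Z(2H-E)=L\boxtimes L(-2E_\Delta)$, each such piece is $d_*\big((L\boxtimes\O)^{a-j+1}\tensor(\O\boxtimes L)^{j+1}(-mE_\Delta)\big)$ for some $0\leq j\leq\lfloor a/2\rfloor$ and some $m\geq2$.

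Finally, since $d$ is finite and $\rho_*\O_Z(-mE_\Delta)=\I_\Delta^m$ with $R^{>0}\rho_*\O_Z(-mE_\Delta)=0$ for $m\geq0$, Leray for $d$ and for $\rho$ convert the cohomology of each piece into $H^i\!\big(X\times X,\,L^{a-j+1}\boxtimes L^{j+1}\tensor\I_\Delta^{m}\big)$. Because $j\leq\lfloor a/2\rfloor$ the two exponents are balanced, so after symmetrizing the smaller is $r=j+1\geq1$, their difference is $s\geq0$, and the displayed bundle has the shape $L^{r+s}\boxtimes L^{r}\tensor\I_\Delta^{m}$; the hypothesis then gives the vanishing provided $m\leq2r$. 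I expect the genuine obstacle to be exactly step three: pinning down the symmetric-power decomposition of $\E_L$ across the ramification divisor $E_\Delta$ and verifying that the resulting $\I_\Delta$-power always satisfies $2\leq m\leq 2r$. This ramification bookkeeping is precisely what the range $0\leq q\leq 2r$ in the hypothesis is tailored to absorb, and controlling the exact $E_\Delta$-twists is where the care is needed.
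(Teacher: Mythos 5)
Your strategy is correct, and after unwinding it is essentially the paper's computation pushed down to $\H$ rather than run upstairs on $\wts$. The paper splits $b$ by parity, uses $\O_{\wts}(2rH-rE)=\varphi^*\O_{\H}(r)$ to handle the extremal twist $bH-rE$ (via $d_*$, and in the odd case via the rank-two sequence $0\to K\to d^*\E_L\to L\boxtimes\O\to 0$ with $K=\O\boxtimes L(-E_{\Delta})$ from Lemma~\ref{det}), and then climbs the ladder $0\to\O_{\wts}(bH-(k+1)E)\to\O_{\wts}(bH-kE)\to\O_Z(bH-kE)\to 0$, each rung vanishing by hypothesis since $\O_Z(bH-kE)=L^{b-k}\boxtimes L^k(-2kE_{\Delta})$. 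Applying $\varphi_*$ to that ladder (note $\varphi_*\O_{\wts}(bH-kE)=\operatorname{Sym}^{b-2k}\E_L\tensor\O_{\H}(k)$) reproduces exactly your filtration of $\operatorname{Sym}^{b-2}\E_L(1)$, so the two arguments are the same computation differently organized; yours buys uniformity (no parity split, no separate analysis of $\E_L$ itself beyond the base case), and your $b=1$ case --- fibre degree $-1$, so both direct images vanish --- is actually cleaner than the paper's, whose odd case at $r=0$ only directly controls $\O_{\wts}(H)$ rather than $\O_{\wts}(H-E)$.

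The step you flag as the genuine obstacle dissolves, with no ramification bookkeeping at all: $Z$ \emph{is} the exceptional divisor $E$ of $\pi$ and meets each fibre of $\varphi$ in a length-two scheme, so your multiplication sequence is nothing but $\varphi_*$ of $0\to\O_{\wts}(aH-E)\to\O_{\wts}(aH)\to\O_Z(aH)\to 0$; since $\O_{\wts}(aH-E)=\O_{\wts}((a-2)H)\tensor\varphi^*\O_{\H}(1)$, this reads $0\to\operatorname{Sym}^{a-2}\E_L(1)\to\operatorname{Sym}^a\E_L\to d_*(L^a\boxtimes\O)\to 0$, i.e.\ $\Lambda=\O_{\H}(1)$ on the nose --- consistent with your guess, since the proof of Lemma~\ref{det} gives $\O_{\H}(1)=\wedge^2\E_L\tensor M$ with $d^*M=\O_Z(-E_{\Delta})$. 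Twisting by $\O_{\H}(c)$ and inducting on $a$ as you propose, the $j$-th graded piece of $\operatorname{Sym}^a\E_L(1)$ is $d_*(L^{a-2j}\boxtimes\O)\tensor\O_{\H}(j+1)=d_*\bigl(L^{a-j+1}\boxtimes L^{j+1}(-(2j+2)E_{\Delta})\bigr)$, whose $H^{i\geq 1}$ equals $H^i(X\times X,L^{a-j+1}\boxtimes L^{j+1}\tensor\I_{\Delta}^{2j+2})$; so in your notation $m=2j+2=2r$ with $r=j+1$ and $s=a-2j\geq 0$, and every piece sits exactly at the admissible boundary $q=2r$. The base cases are $\O_{\H}(c)$, a direct summand of $d_*d^*\O_{\H}(c)$ whose cohomology is controlled by $H^i(X\times X,L^c\boxtimes L^c\tensor\I_{\Delta}^{2c})$, and $\E_L(c)=d_*(L^{c+1}\boxtimes L^c(-2cE_{\Delta}))$, again at $q=2r$. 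A small bonus of your route: it only ever invokes the $q=2r$ instances of the hypothesis, whereas the paper's odd case also needs the single non-boundary instance $q=2r-1$ coming from the term $L^{r+1}\boxtimes L^r((-2r+1)E_{\Delta})$.
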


\begin{proof}
Suppose $b=2r$ is even.  We know by the proof of \cite[3.6]{vermeireidealreg} that $\mathcal{O}_Z(bH-E)=L^{b-1}\boxtimes L\tensor\mathcal{O}(-2\Delta)$; thus
$$H^i(Z,\mathcal{O}_{Z}(bH-rE))=H^i(X\times X,L^r\boxtimes L^r\tensor \I_{\Delta}^{2r})=0$$  Because $\mathcal{O}_{\wts}(bH-rE)=\varphi^*\mathcal{O}_{\H}(r)$, we know $d_*\mathcal{O}_Z(bH-rE)=\mathcal{O}_{\H}(r)\tensor(\mathcal{O}\oplus M)$ for some line bundle $M$, and hence we know that $H^i(\H,\mathcal{O}_{\H}(r))=0$, but this says that 
$H^i(\wts,\mathcal{O}_{\wts}(bH-rE))=0$. From the sequences $$\ses{\mathcal{O}_{\wts}(bH-(k+1)E)}{\mathcal{O}_{\wts}(bH-kE)}{\mathcal{O}_{Z}(bH-kE)}$$
for $k+1\leq r$ we see that $H^i(\wts,\mathcal{O}_{\wts}(bH-E))=0$, as the cohomology of the rightmost terms vanishes by hypothesis since $H^i(Z,\mathcal{O}_{Z}(bH-kE))=H^i(X\times X,L^{b-k}\boxtimes L^k\tensor \I_{\Delta}^{2k})=0$.  

Now, suppose that $b=2r+1$ is odd.  As in the previous paragraph, we have $\mathcal{O}_{\wts}((b-1)H-rE)=\varphi^*\mathcal{O}_{\H}(r)$, thus we see that $\varphi_*\mathcal{O}_{\wts}(bH-rE)=\mathcal{O}_{\H}(r)\tensor\varphi_*\mathcal{O}_{\wts}(H)=\mathcal{O}_{\H}(r)\tensor \E$.  It is therefore enough to show that $H^i(\H,\mathcal{O}_{\H}(r)\tensor \E)=0$, and then repeating the same argument as above gives $H^i(\wts,\mathcal{O}_{\wts}(bH-E))=0$.

We have the sequence on $Z$
$$\ses{K}{d^*\E_L}{L\boxtimes\mathcal{O}}$$
where $K=d^*\wedge^2\E_L\tensor \left(L^*\boxtimes\mathcal{O}\right)=\mathcal{O}\boxtimes L(-E_{\Delta})$ by Lemma~\ref{det}.  As in the proof of Lemma~\ref{det}, we have $d_*d^*\E_L=\E_L\oplus(\E_L\tensor M)$, thus $$d_*\left(\mathcal{O}_Z(2rH-rE)\tensor d^*\left(\E_L\tensor M^*\right)\right)=\E_L\tensor M^*(r)\oplus \E_L(r)$$
Thus it suffices to show $H^i(Z,\mathcal{O}_Z(2rH-rE)\tensor d^*\left(\E_L\tensor M^*\right))=0$.  However, we have 
$$K\tensor \mathcal{O}_Z(2rH-rE)\tensor d^*M^*=L^r\boxtimes L^{r+1}(-2rE_{\Delta})$$
and
$$L\boxtimes\mathcal{O} \tensor \mathcal{O}_Z(2rH-rE)\tensor d^*M^*=L^{r+1}\boxtimes L^r((-2r+1)E_{\Delta})$$
and so the cohomology of each vanishes by hypothesis.
\qed
\end{proof}

Fortunately, the vanishing in Proposition~\ref{vanishonsec} is not too difficult to understand.

\begin{prop}\label{uniform}
Let $X$ be a smooth variety of dimension $d$, $M$ a very ample line bundle.  Choose $k$ so that $k\geq d+3$ and so that $M^{k-d-1}\tensor\omega_X^*$ is big and nef.  Letting $L=M^k$, we have $$H^i(X\times X,L^{r+s}\boxtimes L^r\tensor \I_{\Delta}^{q})=0$$
for $i,r\geq1$, $s\geq0$, $0\leq q\leq2r$.  
\end{prop}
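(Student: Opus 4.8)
The plan is to transfer the computation to the blow-up $\sigma\colon Z=\operatorname{Bl}_{\Delta}(X\times X)\to X\times X$ already appearing in Notation and Terminology~\ref{maps}, and to reduce the assertion to the vanishing of the cohomology of a single line bundle on the smooth variety $Z$, which I would then attack with Kawamata--Viehweg vanishing. Write $\mathcal L=L^{r+s}\boxtimes L^r$ and let $E_\Delta$ be the exceptional divisor. The first step is the identification
$$H^i(X\times X,\mathcal L\tensor\I_\Delta^{q})=H^i(Z,\sigma^*\mathcal L\tensor\O_Z(-qE_\Delta)),\qquad i\geq0,$$
which follows from Leray and the projection formula once one knows $\sigma_*\O_Z(-qE_\Delta)=\I_\Delta^{q}$ and $R^j\sigma_*\O_Z(-qE_\Delta)=0$ for $j\geq1$, $q\geq0$. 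The latter is routine: the fibres of $\sigma$ over $\Delta$ are copies of $\P^{d-1}$ on which $\O_Z(-E_\Delta)$ restricts to $\O(1)$, so one inducts on $q$ using the sequences $\ses{\O_Z(-(q+1)E_\Delta)}{\O_Z(-qE_\Delta)}{\mathcal Q_q}$, where $\mathcal Q_q:=\O_Z(-qE_\Delta)|_{E_\Delta}$ restricts to $\O(q)$ on each fibre, together with $H^{j}(\P^{d-1},\O(q))=0$ for $j\geq1$, $q\geq0$. I prefer the blow-up to the naive $\I_\Delta$-adic filtration of $X\times X$ precisely because of this reduction: the filtration would compute the graded pieces as $L^{2r+s}\tensor\operatorname{Sym}^q\Omega_X$ on the diagonal, but closing the induction at $i=1$ would require a separate surjectivity-onto-jets statement that the blow-up sidesteps.

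Second, I would rewrite the line bundle on $Z$ in adjoint form. Since $\Delta\cong X$ has codimension $d$ in $X\times X$, one has $\omega_Z=\sigma^*(\omega_X\boxtimes\omega_X)\tensor\O_Z((d-1)E_\Delta)$, so that
$$\sigma^*\mathcal L\tensor\O_Z(-qE_\Delta)=\omega_Z\tensor\D,\qquad \D:=\sigma^*\big((L^{r+s}\tensor\omega_X^*)\boxtimes(L^r\tensor\omega_X^*)\big)\tensor\O_Z(-(q+d-1)E_\Delta).$$
By Kawamata--Viehweg vanishing it therefore suffices to prove that $\D$ is big and nef, and this is where the numerical hypotheses enter.

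The genuine work is this positivity statement, which I would establish by splitting off the negative part of $\D$ against a globally generated piece. Decompose
$$\D=(q+d-1)\big(\sigma^*(M\boxtimes M)-E_\Delta\big)+\sigma^*\mathcal R,\qquad \mathcal R=\big(L^{r+s}\tensor\omega_X^*\tensor M^{-(q+d-1)}\big)\boxtimes\big(L^r\tensor\omega_X^*\tensor M^{-(q+d-1)}\big).$$
The first summand is nef: because $M$ is very ample the diagonal of $\P^N\times\P^N$ is cut out by the bilinear forms spanning the sections of $\O(1,1)$ vanishing on it, so $(M\boxtimes M)\tensor\I_\Delta$ is globally generated, whence $\sigma^*(M\boxtimes M)-E_\Delta$ is globally generated and in particular nef. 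For the second summand, a box product of big and nef line bundles is again big and nef, so it is enough to check that each factor $M^{kr'-(q+d-1)}\tensor\omega_X^*$ (with $r'\in\{r,r+s\}$) is big and nef on $X$; using $L=M^k$, $q\leq2r$, and $k\geq d+3$, I would write it in the worst case $q=2r$ as $M^{(k-2)(r-1)}\tensor(M^{k-d-1}\tensor\omega_X^*)$, the first factor being globally generated and the second big and nef by hypothesis. The extreme case $r=1$, $q=2$, $s=0$ reduces this factor to exactly $M^{k-d-1}\tensor\omega_X^*$, which shows the hypothesis is precisely calibrated. Since the sum of a nef divisor and a big-and-nef divisor is big and nef, $\D$ is big and nef and the proof closes. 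The one genuine obstacle is thus contained entirely in this last paragraph, namely the uniform positivity bookkeeping over the whole range $i,r\geq1$, $s\geq0$, $0\leq q\leq2r$, which is where all three hypotheses ($M$ very ample, $k\geq d+3$, and $M^{k-d-1}\tensor\omega_X^*$ big and nef) are consumed.
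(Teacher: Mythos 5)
Your proof is correct, and its skeleton coincides with the paper's: both pass to the blow-up $Z=\operatorname{Bl}_{\Delta}(X\times X)$ (write $\sigma\colon Z\to X\times X$ for the blow-down), write $\sigma^*(L^{r+s}\boxtimes L^r)\tensor\mathcal{O}_Z(-qE_{\Delta})$ as $\omega_Z$ plus a big and nef part using $\omega_Z=\sigma^*(\omega_X\boxtimes\omega_X)\tensor\mathcal{O}_Z((d-1)E_{\Delta})$, and conclude by Kawamata--Viehweg; your preliminary reduction via $\sigma_*\mathcal{O}_Z(-qE_{\Delta})=\I_{\Delta}^{q}$ and the vanishing of higher direct images is the same step the paper takes implicitly, and your verification of it is sound. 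The genuine difference is in how positivity of the non-canonical part is certified. The paper splits into the cases $r\geq2$ and $r=1$, factors $B$ as an ample (resp.\ big and nef) box product $(L-K_X)\boxtimes(L-K_X)$ (resp.\ $(M^{k-d-1}-K_X)\boxtimes(M^{k-d-1}-K_X)$) tensored with a term of the form $M^{a}\boxtimes M^{b}\tensor\mathcal{O}((-d+1-q)E_{\Delta})$, whose global generation is quoted from \cite[3.1]{bel}; the hypothesis $k\geq d+3$ is consumed precisely in checking the resulting bound $k(r-1)\geq d-1+2r$ when $r\geq2$. You instead peel off $(q+d-1)$ copies of $\sigma^*(M\boxtimes M)-E_{\Delta}$, globally generated by the elementary observation that $\Delta_X$ is the scheme-theoretic intersection of $X\times X$ with $\Delta_{\P^N}$, which is cut out by $(1,1)$-forms, and you absorb the hypotheses into the leftover factor via the identity $kr-(q+d-1)=(k-2)(r-1)+(2r-q)+(k-d-1)$, exhibiting each factor of $\mathcal{R}$ as (globally generated)$\,\tensor\,$(big and nef). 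Each route buys something: yours is uniform in $r$, $q$, $s$ (no case split), self-contained (no appeal to \cite{bel}), and in fact uses $k\geq d+3$ only through $k\geq2$ --- so your decomposition actually establishes the sharper statement that $k\geq2$ together with $M^{k-d-1}\tensor\omega_X^*$ big and nef suffices, in the spirit of the improvements sketched in Remarks~\ref{adjoint} and~\ref{fano} --- while the paper's version outsources the global-generation step to a standard reference at the price of the case division and the slightly stronger numerical hypothesis.
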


\begin{proof}
Note as above that $H^i(X\times X,L^{r+s}\boxtimes L^r\tensor \I_{\Delta}^{2r})=H^i(Z,L^{r+s}\boxtimes L^r\tensor \mathcal{O}(-2rE_{\Delta}))$, where $E_{\Delta}\rightarrow\Delta$ is the exceptional divisor of the blow-up.  Note further that $K_Z=K_X\boxtimes K_X\tensor \mathcal{O}((\dim X-1)E_{\Delta})$.

Assume first that $r\geq2$.
Then $$L^{r+s}\boxtimes L^r\tensor \mathcal{O}(-2rE_{\Delta})=K_Z\tensor (L^{r+s}-K_X)\boxtimes(L^r-K_X)\tensor \mathcal{O}((-d+1-q)E_{\Delta})$$
but this is $K_Z+B$ where
$$B=\left[(L-K_X)\boxtimes(L-K_X)\right]\tensor \left[L^{r+s-1}\boxtimes L^{r-1}\tensor\mathcal{O}\left((-d+1-q)E_{\Delta}\right)\right]$$

Because $M^k-K_X$ is ample, $(L-K_X)\boxtimes(L-K_X)$ is ample.  We are thus left to show that 
$$M^{k(r+s-1)}\boxtimes M^{k(r-1)}\tensor\mathcal{O}\left((-d+1-q)E_{\Delta}\right)$$
is globally generated.  However, as $k\geq d+3$, we have $k(r-1)\geq d-1+2r$ and so $M^{k(r+s-1)}\boxtimes M^{k(r-1)}\tensor\mathcal{O}\left((-d+1-2r)E_{\Delta}\right)$ is globally generated by \cite[3.1]{bel}.  Thus $B$ is big and nef and so vanishing follows from Kawamata-Viehweg vanishing \cite{kaw},\cite{vie}.

Now let $r=1$.  Then 
$$L^{1+s}\boxtimes L\tensor \mathcal{O}(-2E_{\Delta})=K_Z\tensor (M^{k+ks}-K_X)\boxtimes(M^{k}-K_X)\tensor \mathcal{O}((-d+1-q)E_{\Delta})$$
but this is $K_Z+B$ where
$$B=\left[(M^{k-d-1}-K_X)\boxtimes (M^{k-d-1}-K_X)\right]\tensor \left[M^{ks+d+1}\boxtimes M^{d+1}\right]\tensor \mathcal{O}((-d+1-q)E_{\Delta})$$
As above, $B$ is big and nef.
\qed
\end{proof}

\begin{rmk}\label{adjoint}
There are numerous ways to rearrange the terms in Proposition~\ref{uniform} to produce the desired vanishing.  

For example, a similar argument shows that if $M$ is very ample, $\omega_X\tensor M$ is big and nef, and $B$ is nef, then letting $L=\omega_X\tensor M^k\tensor B$ gives the vanishing for $k\geq d+2$ (Cf. \cite[Theorem 1]{el}).
If, further, $B$ is also big, then letting $L=\omega_X\tensor M^k\tensor B$ gives the vanishing for $k\geq d+1$.
\end{rmk}

\begin{rmk}\label{fano}
In Proposition~\ref{uniform}, if $\omega_X^*$ is big and nef (e.g. $X$ is Fano) then a slight revision of the argument shows it is enough to take $L=M^k$ for $k\geq d+1$.
\end{rmk}

\begin{rmk}\label{gaussianetal} 
Note that the vanishing condition in Proposition~\ref{vanishonsec} is intimately related to the surjectivity of the higher-order Gauss-Wahl maps as defined in \cite{wahl}.  Note in particular part (7) of Corollary~\ref{list}.
\end{rmk}

\begin{cor}\label{list}
The following embedded varieties satisfy $\NS$, $p\geq2$:
\begin{enumerate}
\item $X$ is a non-special smooth curve satisfying $N_p$ (Proposition~\ref{starstarforcurves}).
\item $X$ is a smooth variety embedded by a sufficiently high power of an ample line bundle.
\item $X^d\neq\P^d$ is a smooth variety embedded by $L=K_X\tensor M^{d+k}$, $k\geq p$, where $M$ is very ample and $K_X\tensor M$ is ample.
\item $X^d\neq\P^d$ is a smooth Fano variety embedded by $L=(-K_X)^{r}$ where $r\geq d+p-1$.
\item $X^d$ is an abelian variety embedded by $L^k$, where $L$ is ample and $k\geq2d+4$.
\item $X^d$ is a smooth projective toric variety embedded by $L^k$, where $L$ is ample, $L^{k-d-1}\tensor\omega_X^*$ is ample, and $k\geq \max\{d+3, d+p-1\}$.

\item $X=G/P$ where $G=\operatorname{SL}(V)$, $P$ is a parabolic subgroup, and $L=M^r$ where $M$ is a very ample line bundle such that the embedding by $L$ is $3$-very ample and $r\geq p$.
\item $X=v_{d_1,\ldots,d_r}(\P^{n_1}\times\cdots\times\P^{n_r})\subset\P^N$ where $d_i\geq p\geq3$.
\end{enumerate}
\end{cor}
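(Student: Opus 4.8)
The plan is to verify, family by family, the two clauses in the definition of $\NS$ (Notation~\ref{starstar}): that the embedding is $3$-very ample and satisfies $N_{2,p}$, and that $H^i(\wts,\mathcal{O}_{\wts}(bH-E))=0$ for $i,b\geq1$. The first clause is purely a statement about the syzygies of the given embedding, and in each case I would dispatch it by invoking the relevant known syzygy theorem for that family. The second clause I would in every case reduce, via Proposition~\ref{vanishonsec}, to the product vanishing $H^i(X\times X,L^{r+s}\boxtimes L^r\tensor\I_{\Delta}^q)=0$ for $i,r\geq1$, $s\geq0$, $0\leq q\leq2r$, and then establish that vanishing from Proposition~\ref{uniform} together with its variants in Remarks~\ref{adjoint} and~\ref{fano}. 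So the corollary amounts to pairing one syzygy input with one positivity input on each line.

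Carrying this out: case (1) is Proposition~\ref{starstarforcurves}, once one records that a non-special line bundle satisfying $N_p$ with $p\geq2$ is $3$-very ample and satisfies $N_{2,p}$. For case (2) I would take $L=A^k$ with $A$ ample and $k\gg0$: Serre vanishing gives $3$-very ampleness, the Ein--Lazarsfeld theorem gives $N_{2,p}$, and Proposition~\ref{uniform} (with $M=A$) gives the product vanishing. Cases (3) and (4) are the adjoint and Fano specializations of the same mechanism: for (3) I would write $L=\omega_X\tensor M^{d+k}$, read $N_{2,p}$ off Ein--Lazarsfeld and take the vanishing from Remark~\ref{adjoint}; for (4), where $\omega_X^*$ is ample, I would use the Fano bound of Remark~\ref{fano} for the vanishing and the corresponding anticanonical syzygy statement for $N_{2,p}$. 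Cases (5) and (6) run the Kawamata--Viehweg computation of Proposition~\ref{uniform} directly: on an abelian variety $\omega_X$ is trivial, so the positivity requirement on $Z$ collapses and the stated bound $k\geq2d+4$ supplies the vanishing while Pareschi's theorem supplies $N_{2,p}$; on a smooth toric variety ampleness coincides with very ampleness, so the hypothesis that $M^{k-d-1}\tensor\omega_X^*$ be ample is exactly what Proposition~\ref{uniform} asks, with the toric linear-syzygy bounds giving $N_{2,p}$ once $k\geq d+p-1$.

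The genuinely non-routine cases are the homogeneous ones, (7) and (8), and I expect these to be the main obstacle. Here the generic route fails: both $G/P$ and $v_{d_1,\ldots,d_r}(\P^{n_1}\times\cdots\times\P^{n_r})$ are Fano, so Remark~\ref{fano} would force a power growing with $\dim X$, whereas the hypotheses permit arbitrarily small multiples ($r\geq p$, resp.\ $d_i\geq p$). The resolution is to abandon Proposition~\ref{uniform} for these and instead compute every group in Proposition~\ref{vanishonsec} by representation theory: on $G/P$ with $G=\operatorname{SL}(V)$, and on a product of projective spaces, the twisted powers of $\I_{\Delta}$ on $X\times X$ can be organized equivariantly, and Bott--Kempf vanishing together with the K\"unneth formula yields $H^i(X\times X,L^{r+s}\boxtimes L^r\tensor\I_{\Delta}^q)=0$ for \emph{all} $r\geq1$, with no lower bound tied to the dimension, as soon as $L$ is $3$-very ample (assumed in (7), and forced by $d_i\geq3$ in (8)). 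The residual syzygy condition $N_{2,p}$ is then the known linear-syzygy bound for flag and Segre--Veronese varieties. The delicate point---where a careful rather than mechanical argument is needed---is controlling the ideal-sheaf filtration $\I_{\Delta}^q$ equivariantly and checking that the resulting weights never land on the Bott-singular locus.
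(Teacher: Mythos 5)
Your overall strategy and your treatment of cases (1)--(4) and (6) coincide with the paper's proof: in each of those cases the paper pairs a known syzygy theorem (Proposition~\ref{starstarforcurves}, \cite{mgreen},\cite{inamdar}, \cite[3.1]{el}, \cite{hss}) with the vanishing of Proposition~\ref{vanishonsec} supplied by Proposition~\ref{uniform} and Remarks~\ref{adjoint} and~\ref{fano}, exactly as you propose. But case (5) as you argue it fails. Proposition~\ref{uniform} requires $L$ to be the $k$-th power of a \emph{very ample} line bundle $M$, because the global-generation step rests on \cite[3.1]{bel}; on an abelian variety the given $L$ is only ample (for a principal polarization even $L^2$ is not very ample, since it factors through the Kummer variety), so $L^k$ need not be any power of a very ample bundle, and passing to the very ample $M=L^3$ inflates the requirement to roughly $k\geq 3d+9$, well past the stated $k\geq 2d+4$. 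Nor does triviality of $\omega_X$ make the bound ``collapse'': the binding constraint in Proposition~\ref{uniform} is the global generation of $M^{k(r+s-1)}\boxtimes M^{k(r-1)}\tensor\mathcal{O}\left((-d+1-q)E_{\Delta}\right)$, which is untouched by $\omega_X\cong\mO_X$, and Remark~\ref{fano} is unavailable because the trivial bundle is nef but not big. The paper instead invokes Pareschi \cite[Theorem C]{parwahl}, whose proof exploits translations on the abelian variety, to get $H^i(X\times X,(L^k)^{r+s}\boxtimes (L^k)^r\tensor\I_{\Delta}^{2r})=0$ for $k\geq6$, together with \cite{bs} for $(k-2)$-very ampleness (a point your sketch never verifies) and \cite{par},\cite{pp} for $N_{k-3}$.

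In cases (7) and (8) your diagnosis is correct --- these homogeneous varieties are Fano, so Proposition~\ref{uniform} and Remark~\ref{fano} would force a power growing with $\dim X$ --- but your proposed fix is only a program, not a proof: the equivariant control of the filtration of $\I_{\Delta}^{q}$ and the check that no weight lands on the Bott-singular locus is precisely what you defer, and it is the entire mathematical content of these cases. In fact that vanishing is a known theorem: it is the surjectivity of the (higher-order) Gaussian--Wahl maps for flag varieties, i.e.\ Wahl's conjecture, proved by Kumar, and the paper's proof consists of citing \cite[2.5]{kumar} and \cite[6.5]{wahl2} for the vanishing (cf.\ Remark~\ref{gaussianetal}), with $N_{2,p}$ coming from \cite{man} for (7) and \cite{hss} for (8). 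So your plan stops exactly where the work begins; to be complete it must either carry out the weight analysis in detail or replace it with these references.
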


\begin{proof}
For part (2), we note that a sufficiently high power of an ample line bundle satisfies $N_p$ by \cite{mgreen},\cite{inamdar}.  Further, the vanishing in Proposition~\ref{vanishonsec} is easily seen to hold for sufficiently high powers as well.

For part (3), it is shown in \cite[3.1]{el} that $L=K_X\tensor M^{d+k}$ satisfies $N_{k}$, $k\geq0$.  The result now follows from Remark~\ref{adjoint}.

Part (4) follows as in part (3) together with Remark~\ref{fano}.

For part (5), it is shown in \cite{bs} that $L^k$ is $(k-2)$-very ample and it is shown in \cite{par}, \cite{pp} that $L^k$ satisfies $N_{k-3}$.  It is shown in \cite[Theorem C]{parwahl} that $H^i(X\times X,(L^k)^{r+s}\boxtimes (L^k)^r\tensor \I_{\Delta}^{2r})=0$ for $i,r\geq1$, $s\geq0$ and $k\geq6$.

For (6), it is shown in \cite{hss} that $X$ satisfies $N_{p}$.  The result now follows by Proposition~\ref{uniform}.

For (7), it is shown in \cite{man} that if $X=G/P$ where $G=\operatorname{SL}(V)$, $P$ is a parabolic subgroup, and $L$ is a very ample line bundle, then the embedding by $L^p$ satisfies $N_p$.  By \cite[2.5]{kumar} and \cite[6.5]{wahl2} we know that $H^i(X\times X,L^{r+s}\boxtimes L^r\tensor \I_{\Delta}^{2r})=0$ for $i,r\geq1$, $s\geq0$ as long as $L=M^k$, $k\geq2$.

For (8), it is shown in \cite{hss} that $X$ satisfies $N_p$, and again by \cite[2.5]{kumar} and \cite[6.5]{wahl2} we are done.

\qed
\end{proof}

\section{Projective Normality}

\begin{thm}\label{projnor}
If $X^d\subset\P^n$ is smooth, projectively normal, and satisfies $N_{2d}^{\Sigma}$, then $\Sigma$ is projectively normal.
\end{thm}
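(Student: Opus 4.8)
The plan is to prove projective normality of $\Sigma$ by showing that the restriction maps $H^0(\P^n,\mO(b))\to H^0(\Sigma,\mO_\Sigma(b))$ are surjective for all $b\geq 1$, which (given that $X$ is already projectively normal and $\Sigma$ is reduced and irreducible) is equivalent to the surjectivity on $\Sigma$ together with the vanishing $H^1(\P^n,\I_\Sigma(b))=0$ for all $b\geq 1$. The natural route is to transfer the question from $\Sigma$ to the smooth resolution $\wts$, where the vanishing hypotheses in $N^\Sigma_{2d}$ are phrased. Let me sketch the steps.

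First I would use the birational morphism $\pi\colon\wts\to\Sigma$ together with the computation $R^i\pi_*\mO_{\wts}=H^i(X,\mO_X)\tensor\mO_X$ recorded in Notation and Terminology~\ref{maps}. Twisting by $\mO(bH)$ (which is $\pi^*$ of the hyperplane class, so the projection formula applies) gives $R^i\pi_*\mO_{\wts}(bH)=H^i(X,\mO_X)\tensor\mO_X(b)$ for $i\geq 1$, supported on $X$. The Leray spectral sequence for $\pi$ then relates $H^\bullet(\wts,\mO_{\wts}(bH))$ to $H^\bullet(\Sigma,\mO_\Sigma(b))$, with correction terms that are cohomology groups of twists of $\mO_X$ pulled back from $X$. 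Since $X$ is projectively normal and (being a smooth linearly normal variety) its structure sheaf twists have controlled cohomology, these correction terms either vanish in the relevant range or contribute in a way that can be matched against the projectively normal $X$ sitting inside $\P^n$.

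Next I would translate the vanishing condition (2) of $N^\Sigma_{2d}$, namely $H^i(\wts,\mO_{\wts}(bH-E))=0$ for $i,b\geq 1$, into information about $\I_\Sigma$. The exceptional divisor $E$ of $\pi$ lives over $X$, so the sequence $\ses{\mO_{\wts}(bH-E)}{\mO_{\wts}(bH)}{\mO_E(bH)}$ connects the twisted structure sheaf of $\wts$ to that of the divisor $E$, which maps to $X$. Combining the vanishing $H^i(\wts,\mO_{\wts}(bH-E))=0$ with the Leray analysis above should force $H^0(\wts,\mO_{\wts}(bH))$ to have exactly the dimension predicted by projective normality, and kill the higher cohomology $H^i(\Sigma,\mO_\Sigma(b))$ for $i\geq 1$, $b\geq 1$. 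The algebraic condition $N_{2,p}$ with $p=2d$ in hypothesis (1) guarantees enough of the syzygetic/regularity structure of $X$ to run the bookkeeping; in particular $3$-very ampleness ensures $\dim\Sigma=2d+1$ so the dimension counts are the expected ones.

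The main obstacle will be controlling the correction terms $H^i(X,\mO_X)\tensor H^j(X,\mO_X(b))$ coming from the nontrivial higher direct images $R^i\pi_*\mO_{\wts}$ — in the curve case of \cite{vermeiresecreg}, \cite{sidver} only $R^1\pi_*$ appears and is a single copy of $H^1(X,\mO_X)$, whereas for $X^d$ with $d\geq 2$ there are higher $R^i\pi_*$ with $i$ up to $d$, and the spectral sequence has genuinely more terms to reconcile. Matching these against the projective normality of $X$ (rather than merely its linear normality) is precisely where the strength of the hypothesis $N^\Sigma_{2d}$, with its index $2d$ tied to $\dim X$, must be used: one needs the vanishing on $\wts$ across the full range $b\geq 1$ to cancel every off-diagonal contribution. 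I would expect the heart of the argument to be an inductive or degree-by-degree comparison in $b$, threading the exact sequences in $E$ and the Leray terms together so that the only surviving piece is the desired surjection $H^0(\P^n,\mO(b))\twoheadrightarrow H^0(\Sigma,\mO_\Sigma(b))$.
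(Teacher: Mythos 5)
There is a genuine gap, and it sits exactly at the point your last paragraph waves at. Your plan extracts information from hypothesis (2) of $N^{\Sigma}_{2d}$ via the sequence $\ses{\mO_{\wts}(bH-E)}{\mO_{\wts}(bH)}{\mO_Z(bH)}$ and the Leray spectral sequence for $\pi$, but everything obtainable this way computes the groups $H^i(\Sigma,\mO_\Sigma(b))$ and, after the further identifications of Lemma~\ref{change} (which need extra vanishing on $X$ not assumed in this theorem), controls $H^i(\P^n,\I_{\Sigma}(b))$ only for $i\geq 2$: a class in $H^i(\wts,\mO_{\wts}(bH-E))$ with $i\geq1$ corresponds to ideal-sheaf cohomology in degree $i+1\geq2$. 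It gives no handle on $H^1(\P^n,\I_{\Sigma}(b))$, i.e.\ on the surjectivity of $H^0(\P^n,\mO(b))\rightarrow H^0(\Sigma,\mO_{\Sigma}(b))$, which is the actual content of projective normality; knowing $h^0(\Sigma,\mO_\Sigma(b))$ ``has the predicted dimension'' cannot certify that the sections are restrictions from $\P^n$. The paper itself flags this asymmetry in the proof of Proposition~\ref{starstarforcurves}: for curves, condition (2) holds under $N_p$ alone, and the stronger hypothesis $\deg(L)\geq 2g+3$ in \cite{vermeiresecreg},\cite{sidver} ``is needed only to show $H^1(\P^n,\I_{\Sigma}(b))=0$.'' So the vanishing you propose to deduce from condition (2) is strictly stronger than condition (2), and no amount of threading the $E$-sequences and Leray terms will close that loop.

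What the paper actually does is supply a separate mechanism for the $H^1$ statement. After citing normality (\cite[2.2]{vermeiresing}) and linear normality (\cite[Remark 13]{vermeiresecreg}) --- neither of which your sketch establishes --- it runs induction on the degree $k$ through the restricted Euler sequence, reducing $H^1(\P^n,\I_{\Sigma}(k+1))=0$ to the vanishing $H^1(\Sigma,\Omega^1_{\P^n}\tensor\mO_{\Sigma}(k+1))=0$, i.e.\ to surjectivity of the multiplication maps $\Gamma(\mO_\Sigma(1))\tensor\Gamma(\mO_\Sigma(k))\rightarrow\Gamma(\mO_\Sigma(k+1))$. For $k\geq2$ this vanishing is delivered by the syzygy machinery of Theorem~\ref{forgeneral} (the $M_{\E_L}$-bundle analysis on $\H$), and this is where the full strength $N_{2,2d}$ of the hypothesis enters --- not, as you suggest, in bookkeeping the Leray correction terms $H^i(X,\mO_X)\tensor H^j(X,\mO_X(b))$, which are the easy part. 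The remaining case $H^1(\P^n,\I_{\Sigma}(2))=0$ requires its own argument: pushing a Koszul-type sequence down the double cover $d\colon Z\rightarrow\H$, computing $\varphi_*$ and $R^1\varphi_*$ of $\pi^*\Omega^1_{\P^n}\tensor\mO_{\wts}(2H-E)$ along the $\P^1$-bundle $\varphi$, and using a K\"unneth identification to show the Leray edge map $H^1(\wts,\pi^*\Omega^1_{\P^n}\tensor\mO_{\wts}(2H))\rightarrow H^0(\Sigma,\Omega^1_{\P^n}\tensor\mO(2)\tensor R^1\pi_*\mO_{\wts})$ is injective. Your proposal contains no substitute for either the Euler-sequence induction or these two vanishing arguments, so as written it does not prove the theorem.
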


\begin{proof}
By \cite[2.2]{vermeiresing} $\Sigma$ is normal and by \cite[Remark 13]{vermeiresecreg} $\Sigma\subset\P^n$ is linearly normal.

We use the fact that $H^1(\P^n,\I_{\Sigma}(1))=0$ and the standard diagram
\begin{center}
{\begin{minipage}{1.5in}
\diagram
 & & &  0\dto   &\\
 & & &  \I_{\Sigma}(k+1)\dto   &\\
0\rto&\mathcal{O}mega^1_{\P^n}(k+1)\dto\rto&\Gamma(\P^n,\mathcal{O}(1))\tensor\mathcal{O}_{\P^n}(k)\dto\rto&\mathcal{O}_{\P^n}(k+1)\dto\rto&0\\
0\rto&\mathcal{O}mega^1_{\P^n}\tensor\mathcal{O}_{\Sigma}(k+1)\dto\rto&\Gamma(\Sigma,\mathcal{O}(1))\tensor\mathcal{O}_{\Sigma}(k)\dto\rto&\mathcal{O}_{\Sigma}(k+1)\dto\rto&0\\
  & 0 & 0 & 0 &
\enddiagram
\end{minipage}}
\end{center}
By induction on $k\geq1$, we see that if $H^1(\Sigma,\mathcal{O}mega^1_{\P^n}\tensor\mathcal{O}_{\Sigma}(k+1))=0$ then $H^1(\P^n,\I_{\Sigma}(k+1))=0$.  We will show below (Theorem~\ref{forgeneral}) that $H^1(\Sigma,\mathcal{O}mega^1_{\P^n}\tensor\mathcal{O}_{\Sigma}(k+1))=0$ for $k\geq2$ as a consequence of a more general approach studying the syzygies of $\I_{\Sigma}$.  It will thus be sufficient to show that $H^1(\P^n,\I_{\Sigma}(2))=0$.

Consider the morphism $d:Z\rightarrow\H$; we write $d_*(L\boxtimes \mathcal{O})=\E$.  Pushing the sequence
$$\ses{d^*M_{\E}\tensor (L\boxtimes\mathcal{O})}{M_{L\boxtimes\mathcal{O}}\tensor (L\boxtimes\mathcal{O})}{L\boxtimes L(-E_{\Delta})}$$
down to $\H$ yields
$$\ses{M_{\E}\tensor\E}{(\varphi_*\pi^*\mathcal{O}mega^1_{\P^n}\tensor\mathcal{O}_{\Sigma}(2))\oplus\mathcal{O}_{\H}(1)}{\wedge^2\E\oplus\mathcal{O}_{\H}(1)}.$$

From the sequence on $\wts$
$$\ses{\pi^*\mathcal{O}mega^1_{\P^n}\tensor\mathcal{O}_{\wts}(2H-E)}{\Gamma(\Sigma,\mathcal{O}(1))\tensor\mathcal{O}_{\wts}(H-E)}{\mathcal{O}_{\wts}(2H-E)}$$
and because the restriction of $\mathcal{O}_{\wts}(H-E)$ to a fiber of the $\P^1$-bundle $\varphi:\wts\rightarrow\H$ is $\mathcal{O}(-1)$, we immediately see that $\varphi_*\left[\pi^*\mathcal{O}mega^1_{\P^n}\tensor\mathcal{O}_{\wts}(2H-E)\right]=0$ and $R^1\varphi_*\left[\pi^*\mathcal{O}mega^1_{\P^n}\tensor\mathcal{O}_{\wts}(2H-E)\right]=\mathcal{O}_{\H}(1)$.

Putting these together, consider the sequence on $\wts$
$$\ses{\pi^*\mathcal{O}mega^1_{\P^n}\tensor\mathcal{O}_{\wts}(2H-E)}{\pi^*\mathcal{O}mega^1_{\P^n}\tensor\mathcal{O}_{\wts}(2H)}{\pi^*\mathcal{O}mega^1_{\P^n}\tensor\mathcal{O}_{Z}(2H)}$$
Applying $\varphi_*$ yields
$$\begin{array}{llcllll} 
0&\rightarrow & 0&\rightarrow &\varphi_*\pi^*\mathcal{O}mega^1_{\P^n}\tensor\mathcal{O}_{\wts}(2H)&\rightarrow &(\varphi_*\pi^*\mathcal{O}mega^1_{\P^n}\tensor\mathcal{O}_{\Sigma}(2))\oplus\mathcal{O}_{\H}(1) \\
&\rightarrow & \mathcal{O}_{\H}(1) &\rightarrow & 0  & &
\end{array}$$
and so $H^i(Z,\pi^*\mathcal{O}mega^1_{\P^n}\tensor\mathcal{O}_{Z}(2H))$ splits as a direct sum; in particular, $$H^1(\wts,\pi^*\mathcal{O}mega^1_{\P^n}\tensor\mathcal{O}_{\wts}(2H))\rightarrow H^1(Z,\pi^*\mathcal{O}mega^1_{\P^n}\tensor\mathcal{O}_{Z}(2H))$$ is an injection.  However, by the K\"unneth formula $H^1(Z,\pi^*\mathcal{O}mega^1_{\P^n}\tensor\mathcal{O}_{Z}(2H))=H^0(X,\mathcal{O}mega^1_{\P^n}\tensor\mathcal{O}_{X}(2))\tensor H^1(X,\mathcal{O}_X)$, but this is precisely $H^0(\Sigma,R^1\pi_*\pi^*\mathcal{O}mega^1_{\P^n}\tensor\mathcal{O}_{\wts}(2H))$, hence $H^1(\Sigma,\pi_*\pi^*\mathcal{O}mega^1_{\P^n}\tensor\mathcal{O}_{\wts}(2H))=H^1(\Sigma,\mathcal{O}mega^1_{\P^n}\tensor\mathcal{O}_{\Sigma}(2))=0$.

\qed
\end{proof}

\begin{cor}
In all the examples of Remark~\ref{list}, $\Sigma$ is projectively normal for $p\geq 2d$.
\qed
\end{cor}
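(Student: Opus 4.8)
The plan is to read this off directly from Theorem~\ref{projnor} by checking its three hypotheses on $X$ across the entries of Corollary~\ref{list}. That theorem produces projective normality of $\Sigma$ from the hypotheses that $X$ is smooth, projectively normal, and satisfies $N^{\Sigma}_{2d}$. Smoothness is assumed in every example of the list, so the first hypothesis is automatic, and the work reduces to securing the other two.

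For the condition $N^{\Sigma}_{2d}$, I would first record that the property is monotone in its index: since $N_{2,p}$ asks for linear syzygies for $p-1$ steps, it implies $N_{2,p'}$ for every $p'\leq p$, while the cohomological vanishing in part~(2) of Notation~\ref{starstar} does not involve the index at all. Hence $N^{\Sigma}_{p}$ implies $N^{\Sigma}_{p'}$ whenever $p'\leq p$. Because $d\geq 1$ forces $2d\geq 2$, the value $2d$ lies in the admissible range of Corollary~\ref{list}, so any embedding on the list carrying parameter $p\geq 2d$ satisfies $N^{\Sigma}_{p}$ and therefore $N^{\Sigma}_{2d}$.

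The step I expect to be the genuine obstacle—the only one that is not purely formal—is projective normality of $X$ itself, since the syzygy datum $N_{2,p}$ packaged inside $N^{\Sigma}_{p}$ is a statement about the saturated homogeneous ideal and does not on its own force the coordinate ring of $X$ to be integrally closed. Here I would argue that in each entry of the list the cited results supplying the syzygy input in fact establish the full classical Green--Lazarsfeld property $N_m$ for some $m\geq 0$: for instance $N_p$ with $p\geq 2$ in cases (1), (2), (6), (7), (8); $N_k$ of index at least two in cases (3), (4) via \cite{el}; and $N_{k-3}$ of index at least two in case (5) (since $k\geq 2d+4$) via the results recalled in the proof of Corollary~\ref{list}. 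As the degree-zero part $N_0$ of any such property is precisely projective normality, the second hypothesis of Theorem~\ref{projnor} holds throughout. With all three hypotheses in force for each entry, Theorem~\ref{projnor} applies verbatim and the corollary follows.
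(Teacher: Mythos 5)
Your proof is correct and takes essentially the same route as the paper, which states the corollary without argument precisely because it is the direct combination of Theorem~\ref{projnor} with Corollary~\ref{list}, using the evident monotonicity of $N^{\Sigma}_p$ in $p$. Your additional care on the second hypothesis---observing that the sources cited in Corollary~\ref{list} in fact supply the full Green--Lazarsfeld property $N_p$ (hence $N_0$, i.e.\ projective normality of $X$), and not merely the $N_{2,p}$ datum packaged inside $N^{\Sigma}_p$---fills in a detail the paper leaves tacit.
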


\section{Regularity and Cohen-Macaulayness}

\begin{lemma}\label{change}
Suppose $X\subset\P^n$ is a $3$-very ample embedding of a smooth projective variety satisfying $N_{2,2}$.  Then $H^i(\P^n,\I_{\Sigma}(k))=H^i(B_2,\mathcal{O}(kH-E_1-E_2))$.
\end{lemma}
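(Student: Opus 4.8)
The plan is to realize $\I_{\Sigma}(k)$ as the ordinary pushforward of $\mathcal{O}(kH-E_1-E_2)$ under the composite blow-down $\pi=\rho_1\circ\rho_2\colon B_2\to\P^n$ and to show that all higher direct images vanish; the Leray spectral sequence then forces the asserted equality in every degree. Here $\rho_1\colon B_1=\operatorname{Bl}_X\P^n\to\P^n$ is the blow-up of $X$ (codimension $\geq 2$) with exceptional divisor $E_1$, the smooth variety $\wts\subset B_1$ is the strict transform of $\Sigma$ (equivalently the blow-up of $\Sigma$ along $X$ of Notation~\ref{maps}), and $\rho_2\colon B_2=\operatorname{Bl}_{\wts}B_1\to B_1$ is the blow-up of $\wts$ with exceptional divisor $E_2$.

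First I would dispose of $\rho_2$. Since $\wts$ is smooth and $\mathcal{O}(kH-E_1)$ is pulled back from $B_1$, the standard computation for the blow-up of a smooth center gives $(\rho_2)_*\mathcal{O}_{B_2}(-E_2)=\I_{\wts}$ and $R^{>0}(\rho_2)_*\mathcal{O}_{B_2}(-E_2)=0$, so by the projection formula $R(\rho_2)_*\mathcal{O}_{B_2}(kH-E_1-E_2)=\mathcal{O}_{B_1}(kH-E_1)\otimes\I_{\wts}$, concentrated in degree $0$. (If $\Sigma$ is a hypersurface, so that $\wts$ is already a divisor, $\rho_2$ is an isomorphism and this step is vacuous.) It therefore remains to prove $R(\rho_1)_*\bigl(\mathcal{O}_{B_1}(kH-E_1)\otimes\I_{\wts}\bigr)=\I_{\Sigma}(k)$ in degree $0$ with no higher terms; this is the promised extension of \cite[2.4(6)]{sidver} to arbitrary dimension.

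For the degree-zero part I would note that $\I_{E_1}\cdot\I_{\wts}=\I_{E_1}\cap\I_{\wts}$, because a local equation of $E_1$ is a nonzerodivisor on the integral scheme $\wts$ (as $\wts\not\subset E_1$); since pushforward is left exact and preserves intersections of subsheaves, and since $(\rho_1)_*\mathcal{O}_{B_1}(-E_1)=\I_X$ while $(\rho_1)_*\I_{\wts}=\I_{\Sigma}$ (a function pulls back into $\I_{\wts}$ exactly when it vanishes on the dense open $\Sigma\setminus X$), one gets $(\rho_1)_*\bigl(\mathcal{O}(-E_1)\otimes\I_{\wts}\bigr)=\I_X\cap\I_{\Sigma}=\I_{\Sigma}$, and the twist by $kH$ produces $\I_{\Sigma}(k)$. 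The higher direct images are the crux. Using the ideal sequence $0\to\mathcal{O}(kH-E_1)\otimes\I_{\wts}\to\mathcal{O}(kH-E_1)\to\mathcal{O}_{\wts}(kH-E_1)\to0$ together with $R^{>0}(\rho_1)_*\mathcal{O}(kH-E_1)=0$, the vanishing of $R^{\geq1}(\rho_1)_*\bigl(\mathcal{O}(kH-E_1)\otimes\I_{\wts}\bigr)$ reduces, after writing $\rho_1|_{\wts}=\iota\circ\pi$ with $\pi\colon\wts\to\Sigma$ and $\iota\colon\Sigma\hookrightarrow\P^n$ and applying the projection formula, to two statements about $\pi$: surjectivity of $\I_X(k)\to(\rho_1)_*\mathcal{O}_{\wts}(kH-E_1)$ and, more seriously, $R^{\geq1}\pi_*\mathcal{O}_{\wts}(-E)=0$, where $E=E_1|_{\wts}=Z$ is the exceptional divisor of $\pi$.

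The main obstacle is precisely this last vanishing. I would attack it with the sequence $0\to\mathcal{O}_{\wts}(-E)\to\mathcal{O}_{\wts}\to\mathcal{O}_Z\to0$: applying $R\pi_*$ and using the identification $R^i\pi_*\mathcal{O}_{\wts}=H^i(X,\mathcal{O}_X)\otimes\mathcal{O}_X$ from Notation~\ref{maps} together with the Künneth computation $R^i\pi_*\mathcal{O}_Z=R^i(\pi_1)_*\mathcal{O}_Z=H^i(X,\mathcal{O}_X)\otimes\mathcal{O}_X$ for $Z=\operatorname{Bl}_{\Delta}(X\times X)\to X\times X\xrightarrow{\operatorname{pr}_1}X$, the desired vanishing becomes equivalent to the restriction maps $R^i\pi_*\mathcal{O}_{\wts}\to R^i\pi_*\mathcal{O}_Z$ being isomorphisms for $i\geq1$, the map on $\pi_*$ being the surjection $\mathcal{O}_{\Sigma}\to\mathcal{O}_X$ (here one uses that $\Sigma$ is normal, so $\pi_*\mathcal{O}_{\wts}=\mathcal{O}_{\Sigma}$). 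I expect the cleanest justification to run through the theorem on formal functions, checking that the classes generating $R^i\pi_*\mathcal{O}_{\wts}$ are already represented on the exceptional divisor $Z$; this is the content of the computation behind \cite[Proposition 9]{vermeiresecreg}, and the very argument that yields $R^i\pi_*\mathcal{O}_{\wts}=H^i(\mathcal{O}_X)\otimes\mathcal{O}_X$ should give its compatibility with restriction to $Z$. Granting this, $R^{\geq1}\pi_*\mathcal{O}_{\wts}(-E)=0$ and $\pi_*\mathcal{O}_{\wts}(-E)=\I_{X/\Sigma}$, whence $R^{\geq1}(\rho_1)_*\mathcal{O}_{\wts}(kH-E_1)=0$ and the map $\I_X(k)\to(\rho_1)_*\mathcal{O}_{\wts}(kH-E_1)$ is identified with the natural surjection $\I_X(k)\twoheadrightarrow\I_{X/\Sigma}(k)$, completing the higher-vanishing and hence the lemma.
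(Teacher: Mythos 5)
Your proposal is correct in outline, and it organizes the argument differently from the paper. The paper never factors the pushforward through $B_1$: it works with the composite $\pi\colon B_2\to\P^n$ and peels off the two divisors in the opposite order, using the sequences $\ses{\mathcal{O}_{B_2}(kH-E_1-E_2)}{\mathcal{O}_{B_2}(kH-E_2)}{\mathcal{O}_{E_1}(kH-E_2)}$ and $\ses{\mathcal{O}_{B_2}(kH-E_2)}{\mathcal{O}_{B_2}(kH)}{\mathcal{O}_{\wts}(kH)}$; it identifies $R^i\pi_*\mathcal{O}_{B_2}(kH-E_2)\cong R^{i-1}\pi_*\mathcal{O}_{\wts}(kH)\cong H^{i-1}(X,\mathcal{O}_X)\tensor\mathcal{O}_X(k)$ for $i\geq2$, shows $R^i\pi_*\mathcal{O}_{E_1}(kH-E_2)$ is the same sheaf (and vanishes for $i=0,1$), and then concludes $R^i\pi_*\mathcal{O}_{B_2}(kH-E_1-E_2)=0$ for $i\geq1$ by ``a local computation'' verifying that the induced map between these two copies is an isomorphism. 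Your route instead disposes of $\rho_2$ first (correctly: $R\rho_{2*}\mathcal{O}_{B_2}(-E_2)=\I_{\wts}$ since $\wts$ is smooth, and $kH-E_1$ is pulled back because $\wts\not\subset E_1$), and your bookkeeping on $B_1$ is sound: $\rho_{1*}\bigl(\I_{\wts}(kH-E_1)\bigr)=\I_{\Sigma}(k)$ via $\I_{E_1}\cdot\I_{\wts}=\I_{E_1}\cap\I_{\wts}$ and left exactness, the surjection $\I_X(k)\twoheadrightarrow\I_{X/\Sigma}(k)$ kills $R^1$, and the shift isomorphism reduces everything to the single clean statement $R^{\geq1}\pi_*\mathcal{O}_{\wts}(-E)=0$ on the secant blow-up. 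What your decomposition buys is that the entire lemma is seen to be equivalent (given $\pi_*\mathcal{O}_{\wts}=\mathcal{O}_{\Sigma}$, i.e.\ normality of $\Sigma$, which the paper takes from \cite[2.2]{vermeiresing}) to one intrinsic vanishing on $\wts\to\Sigma$; the paper's version buys brevity, never needing $\rho_{1*}\I_{\wts}=\I_{\Sigma}$ as a separate step.

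Do note, however, that the step you defer is exactly where all the content lives, in both proofs. Your claim that the restriction maps $R^i\pi_*\mathcal{O}_{\wts}\to R^i\pi_*\mathcal{O}_Z$ are isomorphisms for $i\geq1$ is the precise analogue of the paper's elided ``local computation,'' and your justification (``the very argument behind \cite[Proposition 9]{vermeiresecreg} should give compatibility with restriction to $Z$'') is an expectation, not an argument: that proposition computes $R^i\pi_*\mathcal{O}_{\wts}$ but does not assert functoriality of the identification under restriction to the exceptional divisor. The claim is true and your formal-functions strategy is viable --- set-theoretically $\pi^{-1}(X)=Z$, and over $x\in X$ both $\pi$ and $\pi|_Z$ have fiber the locus of length-two subschemes supported at $x$, isomorphic to $\operatorname{Bl}_x X$ (here $3$-very ampleness excludes a secant line through $x$ whose defining scheme misses $x$) --- but the theorem on formal functions then requires comparing cohomology of the thickenings of this common fiber inside $\wts$ with those inside the divisor $Z$, and that comparison is a genuine computation your writeup should carry out rather than cite. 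As written, your proposal matches the paper's level of rigor at this point, but no more than that.
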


\begin{proof}
Consider the sequence
$$\ses{\mathcal{O}_{B_2}(kH-E_1-E_2)}{\mathcal{O}_{B_2}(kH-E_2)}{\mathcal{O}_{E_1}(kH-E_2)}$$
We know that $R^i\pi_*\mathcal{O}_{E_1}(kH-E_2)=0$ for $i=0,1$, and that $R^i\pi_*\mathcal{O}_{E_1}(kH-E_2)=H^{i-1}(X,\mathcal{O}_X)\tensor\mathcal{O}_X(k)$ otherwise.  From the sequence
$$\ses{\mathcal{O}_{B_2}(kH-E_2)}{\mathcal{O}_{B_2}(kH)}{\mathcal{O}_{\wts}(kH)}$$
we see that $R^{i}\pi_*\mathcal{O}_{B_2}(kH-E_2)=R^{i-1}\pi_*\mathcal{O}_{\wts}(kH)=H^{i-1}(X,\mathcal{O}_X)\tensor\mathcal{O}_X(k)$ for $i\geq2$.  Thus a local computation gives $R^i\pi_*\mathcal{O}_{B_2}(kH-E_1-E_2)=0$ for $i\geq1$, and so $H^i(B_2,\mathcal{O}_{B_2}(kH-E_1-E_2))=H^i(\P^n,R^0\pi_*\mathcal{O}_{B_2}(kH-E_1-E_2))=H^i(\P^n,\I_{\Sigma}(k))$.
\qed
\end{proof}

\begin{prop}
Suppose $X\subset\P^n$ is projectively normal and satisfies $N_{2d}^{\Sigma}$, and that $H^i(X,\mathcal{O}_X(r))=0$ for $i,r\geq1$.  Then $H^i(\P^n,\I_{\Sigma}(k))=0$ for $i,k\geq1$.
\end{prop}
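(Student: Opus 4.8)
The plan is to mirror the curve argument of Proposition~\ref{starstarforcurves}, pushing all of the vanishing onto the bundle $\mathcal{O}_{\wts}(kH-E)$ that is controlled by $N_{2d}^{\Sigma}$, and to peel off the degree $i=1$ by hand. First I would dispose of $i=1$: the standing hypotheses include exactly those of Theorem~\ref{projnor} ($X$ smooth, projectively normal, satisfying $N_{2d}^{\Sigma}$), so $\Sigma$ is projectively normal; since $H^1(\P^n,\mathcal{O}(k))=0$, the sequence $\ses{\I_{\Sigma}(k)}{\mathcal{O}_{\P^n}(k)}{\mathcal{O}_{\Sigma}(k)}$ yields $H^1(\P^n,\I_{\Sigma}(k))=0$ for all $k\geq1$. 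For $i\geq2$ it then suffices to produce the isomorphism
$$H^{i}(\wts,\mathcal{O}_{\wts}(kH-E))\cong H^{i+1}(\P^n,\I_{\Sigma}(k)),\qquad i,k\geq1,$$
since the left-hand side vanishes by the definition of $N_{2d}^{\Sigma}$, which forces $H^{j}(\P^n,\I_{\Sigma}(k))=0$ for all $j\geq2$ and $k\geq1$.

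To build that isomorphism I would first record the intermediate vanishing $H^{i}(\wtp,\mathcal{O}_{\wtp}(kH-E))=0$ for $i,k\geq1$ on $\wtp=\operatorname{Bl}_X\P^n$. The blow-down $\rho\colon\wtp\to\P^n$ has $R^0\rho_*\mathcal{O}_{\wtp}(kH-E)=\I_X(k)$ and vanishing higher direct images, so this group equals $H^{i}(\P^n,\I_X(k))$; the case $i=1$ is the projective normality of $X$, while for $i\geq2$ the sequence $\ses{\I_X(k)}{\mathcal{O}_{\P^n}(k)}{\mathcal{O}_X(k)}$ gives $H^{i}(\P^n,\I_X(k))\cong H^{i-1}(X,\mathcal{O}_X(k))=0$ by the standing hypothesis. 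This is the precise place where $H^{i}(X,\mathcal{O}_X(r))=0$ for $i,r\geq1$ enters: it is the higher-dimensional replacement for the non-speciality invoked in Proposition~\ref{starstarforcurves}, where these groups vanish for free on a curve.

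With this vanishing in hand I would restrict to the strict transform $\wts\subset\wtp$: tensoring $\ses{\mathcal{O}_{\wtp}(kH-E)\tensor\I_{\wts}}{\mathcal{O}_{\wtp}(kH-E)}{\mathcal{O}_{\wts}(kH-E)}$ and using that the middle term has no higher cohomology gives $H^{i}(\wts,\mathcal{O}_{\wts}(kH-E))\cong H^{i+1}(\wtp,\mathcal{O}_{\wtp}(kH-E)\tensor\I_{\wts})$ for $i\geq1$. The remaining identification of this with $H^{i+1}(\P^n,\I_{\Sigma}(k))$ is exactly the all-dimensions direct-image computation of Lemma~\ref{change} (the generalization of \cite[2.4(6)]{sidver} used for curves), and chaining the three isomorphisms produces the displayed one, hence the proposition. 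I expect the crux to be this last identification rather than any single cohomology computation: one must verify that pushing $\mathcal{O}_{\wtp}(kH-E)\tensor\I_{\wts}$ down to $\P^n$ produces nothing beyond $\I_{\Sigma}(k)$, and it is the geometry recorded in Notation and Terminology~\ref{maps} — in particular $R^i\pi_*\mathcal{O}_{\wts}=H^i(X,\mathcal{O}_X)\tensor\mathcal{O}_X$ — together once more with $H^i(X,\mathcal{O}_X(r))=0$ that kills the correction terms which are simply absent in the curve case.
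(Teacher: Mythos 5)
Your proposal is correct, but it takes a genuinely different---and notably more direct---route than the paper. You and the paper handle $i=1$ identically (projective normality of $\Sigma$ from Theorem~\ref{projnor}). For $i\geq2$, however, the paper never invokes condition (2) of $N^{\Sigma}_{2d}$ directly: it works through Lemma~\ref{change} on $B_2$, disposes of $k=1$ by hand, and then runs an induction on $k$ through the Euler-sequence diagram, which requires $H^i(\Sigma,\Omega^1_{\P^n}\otimes\mathcal{O}_{\Sigma}(k))=0$; this vanishing is deferred to Theorem~\ref{forgeneral} for $k\geq3$, and the leftover case $k=2$ is settled by a separate Hilbert-scheme computation using $d_*\left(L\boxtimes L(-E_{\Delta})\right)=\wedge^2\E\oplus\mathcal{O}_{\H}(1)$ to get $H^i(\H,\mathcal{O}_{\H}(1))=H^i(\wts,\mathcal{O}_{\wts}(2H-E))=0$. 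You instead run the reduction chain of Proposition~\ref{starstarforcurves} in reverse: $H^i(\P^n,\I_{\Sigma}(k))\cong H^i(\wtp,\I_{\wts}\otimes\mathcal{O}_{\wtp}(kH-E))\cong H^{i-1}(\wts,\mathcal{O}_{\wts}(kH-E))=0$, the last vanishing being exactly hypothesis (2) of $N^{\Sigma}$, with the middle isomorphism justified by $H^j(\wtp,\mathcal{O}_{\wtp}(kH-E))=H^j(\P^n,\I_X(k))=0$ --- and your identification of this as the precise entry point of projective normality of $X$ (for $j=1$) and $H^i(X,\mathcal{O}_X(r))=0$ (for $j\geq2$) is right. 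Each step is sound, and the citation of Lemma~\ref{change} for the passage from $\wtp$ to $\P^n$ is legitimate: the paper itself asserts (in the proof of Proposition~\ref{starstarforcurves}) that Lemma~\ref{change} gives $H^{i}(\wtp,\mathcal{O}_{\wtp}(kH-E)\otimes\I_{\wts})=H^{i}(\P^n,\I_{\Sigma}(k))$ in all dimensions. What your route buys: it severs the forward dependence of this proposition on the syzygy machinery of Theorem~\ref{forgeneral}, and it makes visible that for $i\geq2$ one needs only condition (2) of $N^{\Sigma}$ together with $N_{2,2}$ (for Lemma~\ref{change}); the full strength $p=2d$ is consumed solely by the $i=1$ case through Theorem~\ref{projnor}. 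One small correction to your closing gloss: in Lemma~\ref{change} the correction terms cancel purely from the direct-image computation ($R^i\pi_*\mathcal{O}_{E_1}(kH-E_2)$ matching $R^i\pi_*\mathcal{O}_{B_2}(kH-E_2)$ for $i\geq2$) under $3$-very ampleness and $N_{2,2}$ alone; the hypothesis $H^i(X,\mathcal{O}_X(r))=0$ plays no role there, only in the vanishing on $\wtp$ that you established beforehand.
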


\begin{proof}
We use the condition found in Lemma~\ref{change}.  We already have this for $i=1$.  For $k=1$, $i>1$, consider the sequence
$$\ses{\mathcal{O}_{B_2}(H-E_1-E_2)}{\mathcal{O}_{B_2}(H-E_1)}{\mathcal{O}_{\wts}(H-E_1)}$$
As $H^i(\wts,\mathcal{O}_{\wts}(H-E_1))=0$, we have $H^i(\P^n,\I_{\Sigma}(1))=H^i(\P^n,\I_X(1))=H^{i-1}(X,\mathcal{O}_X(1))=0$ for $i\geq2$.

We now have our result for $i=1$ and for $k=1$.  This gives $H^i(\Sigma,\mathcal{O}_{\Sigma}(1))=0$ for $i\geq1$.  Thus, by induction on $k$ it suffices to show that $H^i(\Sigma,\mathcal{O}mega^1_{\P^n}\tensor\mathcal{O}_{\Sigma}(k))=0$ for $i,k\geq2$ just as in the proof of Theorem~\ref{projnor}.   Again, in Theorem~\ref{forgeneral} we show that $H^i(\Sigma, \mathcal{O}mega^1_{\P^n}\tensor\mathcal{O}_{\Sigma}(k))=0$ for $i\geq 2$ and $k\geq3$.  Thus we will be left to show  $H^i(\Sigma,\mathcal{O}mega^1_{\P^n}\tensor\mathcal{O}_{\Sigma}(2))=0$ for $i\geq 2$.  Equivalently, we may show $H^i(\Sigma,\mathcal{O}_{\Sigma}(2))=0$ for $i\geq1$.  From the sequence
$$\ses{\mathcal{O}_{B_2}(2H-E_1-E_2)}{\mathcal{O}_{B_2}(2H-E_1)}{\mathcal{O}_{\wts}(2H-E_1)}$$     
it suffices to show $H^i(\wts,\mathcal{O}_{\wts}(2H-E_1))=0$ for $i\geq1$.  Consider the line bundle $L\boxtimes L(-E_{\Delta})$ on $Z$.  We know that $H^i(Z,L\boxtimes L(-E_{\Delta}))=H^i(X,\mathcal{O}mega^1_{\P^n}(2))=0$ for $i\geq1$ by hypothesis.  However, $d_*\left(L\boxtimes L(-E_{\Delta})\right)=\wedge^2\E\oplus\mathcal{O}_{\H}(1)$, hence $H^i(\H,\mathcal{O}_{\H}(1))=H^i(\wts,\mathcal{O}_{\wts}(2H-E_1))=0$ for $i\geq1$.
\qed
\end{proof}

\begin{cor}\label{firstreg}
Suppose $X\subset\P^n$ is smooth, projectively normal, and satisfies $N_{2d}^{\Sigma}$, and that $H^i(X,\mathcal{O}_X(r))=0$ for $i,r\geq1$.  Then $\I_{\Sigma}$ is $(2d+3)$-regular.
\qed
\end{cor}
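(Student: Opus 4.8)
The plan is to read off this statement from the preceding Proposition, which already supplies the only geometrically substantive vanishing; what remains is a translation into the language of Castelnuovo--Mumford regularity together with a routine count in high cohomological degree. Recall that $\I_{\Sigma}$ is $(2d+3)$-regular precisely when $H^i(\P^n,\I_{\Sigma}(2d+3-i))=0$ for every $i\geq1$, so I would verify exactly this family of vanishings, splitting into two regimes according to the sign of the twist $k=2d+3-i$.

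First I would dispose of the positive twists. For $1\leq i\leq 2d+2$ one has $1\leq k=2d+3-i\leq 2d+2$, and so the preceding Proposition---applied under the standing hypotheses that $X$ is projectively normal, satisfies $N_{2d}^{\Sigma}$, and has $H^i(X,\mathcal{O}_X(r))=0$ for $i,r\geq1$---gives $H^i(\P^n,\I_{\Sigma}(k))=0$ with no further work. This is the step that carries all the content.

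The remaining regime is $i\geq 2d+3$, where the twist $k=2d+3-i$ is non-positive and the argument is purely formal. I would feed the structure sequence
\[
0\to\I_{\Sigma}(k)\to\mathcal{O}_{\P^n}(k)\to\mathcal{O}_{\Sigma}(k)\to0
\]
into its long exact sequence. Since the embedding is $3$-very ample we have $\dim(\Sigma)=2d+1$, so $H^{i-1}(\Sigma,\mathcal{O}_{\Sigma}(k))=0$ as soon as $i-1\geq 2d+2$, that is for all $i\geq 2d+3$. On the other side, for $k\leq0$ the line bundle $\mathcal{O}_{\P^n}(k)$ has no intermediate cohomology, and its top cohomology $H^n$ is nonzero only when $k\leq -n-1$; but $k=2d+3-i\geq 2d+3-n>-n-1$, so $H^i(\P^n,\mathcal{O}_{\P^n}(k))=0$ for all $i$ in the relevant range $2d+3\leq i\leq n$ (and trivially for $i>n$). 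The long exact sequence then squeezes $H^i(\P^n,\I_{\Sigma}(k))$ between two zeros, completing the verification.

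I do not anticipate a genuine obstacle here: the corollary is essentially bookkeeping on top of the Proposition. The only point that requires a moment's attention is the non-positive-twist regime, where the needed vanishing comes not from the Proposition but from the dimension $\dim(\Sigma)=2d+1$ together with the elementary cohomology of line bundles on $\P^n$; keeping the numerical inequality $2d+3-n>-n-1$ straight is the entirety of the ``difficulty.''
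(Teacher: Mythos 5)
Your proposal is correct and matches the paper's intent exactly: the paper states the corollary with no separate proof precisely because the preceding Proposition supplies $H^i(\P^n,\I_{\Sigma}(k))=0$ for all $i,k\geq1$, which covers every twist $k=2d+3-i\geq1$ in the regularity criterion. Your handling of the remaining nonpositive twists $i\geq 2d+3$ --- via the structure sequence, Grothendieck vanishing above $\dim(\Sigma)=2d+1$, and the elementary cohomology of $\mathcal{O}_{\P^n}(k)$ --- is exactly the routine bookkeeping the paper leaves implicit, and your numerical check $2d+3-n>-n-1$ is sound.
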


\begin{prop}\label{partial}
If $X\subset\P^n$ is amooth, projectively normal, and satisfies $N_2^{\Sigma}$, then $H^i(\P^n,\I_{\Sigma}(k))=0$ for $k<0$, $0\leq i\leq d+2$.
\end{prop}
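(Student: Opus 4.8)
The plan is to convert the statement into a vanishing statement for $\mathcal{O}_\Sigma$ on $\Sigma$ and then to transport it to the smooth model $\wts$, where the positivity of $H$ can be exploited. First, for $k<0$ the long exact sequence attached to $\ses{\I_\Sigma(k)}{\mathcal{O}_{\P^n}(k)}{\mathcal{O}_\Sigma(k)}$, together with $H^i(\P^n,\mathcal{O}_{\P^n}(k))=0$ for $0\le i\le n-1$, identifies $H^i(\P^n,\I_\Sigma(k))\cong H^{i-1}(\Sigma,\mathcal{O}_\Sigma(k))$ for $1\le i\le n-1$ (the case $i=0$ being immediate since $H^0(\mathcal{O}_{\P^n}(k))=0$). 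Because $\dim\Sigma=2d+1$, whenever $\Sigma\neq\P^n$ one has $n\ge 2d+2$, so for $d\ge1$ the whole range $i\le d+2$ lies inside $[1,n-1]$. It therefore suffices to prove that $H^j(\Sigma,\mathcal{O}_\Sigma(k))=0$ for $k<0$ and $0\le j\le d+1$.

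Next I would record a clean vanishing upstairs on $\wts$. Since $\pi\colon\wts\to\Sigma$ is birational onto the projective variety $\Sigma$ and $\mathcal{O}_{\wts}(H)=\pi^*\mathcal{O}_\Sigma(1)$, the class $H$ is nef and big on the smooth $(2d+1)$-fold $\wts$; hence for $k<0$ the line bundle $\mathcal{O}_{\wts}(-kH)$ is nef and big as well. Kawamata--Viehweg vanishing \cite{kaw},\cite{vie} then gives $H^i(\wts,\omega_{\wts}(-kH))=0$ for every $i\ge1$, and Serre duality on $\wts$ rewrites this as $H^j(\wts,\mathcal{O}_{\wts}(kH))=0$ for all $0\le j\le 2d$ and all $k<0$. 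Note that only part (1) of $N^\Sigma_p$ (the $3$-very ampleness together with $N_{2,2}$) is used here, insofar as it supplies the smooth model and the morphisms of Notation and Terminology~\ref{maps}.

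Finally I would compare $\Sigma$ and $\wts$ by the Leray spectral sequence $E_2^{p,q}=H^p(\Sigma,R^q\pi_*\mathcal{O}_{\wts}(kH))\Rightarrow H^{p+q}(\wts,\mathcal{O}_{\wts}(kH))$. Using $\pi_*\mathcal{O}_{\wts}=\mathcal{O}_\Sigma$ (normality of $\Sigma$, \cite{vermeiresing}) and $R^q\pi_*\mathcal{O}_{\wts}=H^q(X,\mathcal{O}_X)\tensor\mathcal{O}_X$ for $q\ge1$ from Notation and Terminology~\ref{maps}, the projection formula gives $E_2^{p,0}=H^p(\Sigma,\mathcal{O}_\Sigma(k))$ and $E_2^{p,q}=H^q(X,\mathcal{O}_X)\tensor H^p(X,\mathcal{O}_X(k))$ for $q\ge1$. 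The crucial observation is that for $k<0$ Serre duality on $X$ together with Kodaira vanishing forces $H^p(X,\mathcal{O}_X(k))=0$ for $0\le p\le d-1$, since $\mathcal{O}_X(-k)$ is ample; thus every term with $q\ge1$ is concentrated in the single column $p=d$. Consequently no nonzero differential enters $E_2^{j,0}$ for $j\le d+1$ (an incoming $d_r$ would originate from $E_2^{j-r,r-1}$ with $j-r=d$, forcing $j\ge d+2$), so $E_2^{j,0}=E_\infty^{j,0}$, which is a subobject of $H^j(\wts,\mathcal{O}_{\wts}(kH))=0$. This gives the required $H^j(\Sigma,\mathcal{O}_\Sigma(k))=0$ for $0\le j\le d+1$.

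The part that needs genuine care, and the reason the range stops exactly at $i=d+2$, is this concentration argument. The surviving group $H^d(X,\mathcal{O}_X(k))\cong H^0(X,\omega_X(-k))^\vee$ is typically nonzero, and the differential $d_2\colon E_2^{d,1}\to E_2^{d+2,0}$ that it feeds is precisely what obstructs extending the vanishing to $j=d+2$. Everything else is the standard dictionary among $\Sigma$, $\wts$, and $X$; the one delicate step is confirming that the Leray differentials into the bottom row really do vanish throughout the stated range, which is exactly where the Kodaira-type computation on $X$ is doing the work.
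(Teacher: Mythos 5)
Your proposal is correct and is essentially the paper's own proof: both reduce to $H^{i-1}(\Sigma,\mathcal{O}_\Sigma(k))$, apply Kawamata--Viehweg together with Serre duality on the smooth $(2d+1)$-fold $\wts$ to get $H^j(\wts,\mathcal{O}_{\wts}(kH))=0$ for $j\le 2d$, and run the Leray spectral sequence for $\pi$ with $R^q\pi_*\mathcal{O}_{\wts}(kH)=H^q(X,\mathcal{O}_X)\otimes\mathcal{O}_X(k)$ concentrated in the column $p=d$ by Kodaira-type vanishing for $k<0$. The only cosmetic difference is at the edge case $j=d+1$ (i.e.\ $i=d+2$), where the paper cites \cite[6.1(1)]{sidver} to conclude $E_2^{d+1,0}=E_\infty^{d+1,0}=0$, while you dispatch it directly and self-containedly via the edge inclusion $E_\infty^{d+1,0}\subseteq H^{d+1}(\wts,\mathcal{O}_{\wts}(kH))=0$.
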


\begin{proof}
This is obvious for $i=0$.  For $i>0$, we show $H^{i-1}(\Sigma,\mathcal{O}_{\Sigma}(k))=0$.  By Kawamata-Viehweg, we know $H^{i-1}(\wts,\mathcal{O}_{\wts}(kH))=0$ for $1\leq i\leq 2d+1$.

For $0\leq j\leq \operatorname{min}\{i-2,d-1\}$, we know that $H^j(\Sigma,R^{i-j-1}\pi_*\mathcal{O}_{\wts}(kH))=H^{i-j-1}(X,\mathcal{O}_X)\tensor H^j(X,\mathcal{O}_X(k))=0$ since $k<0$.  Thus for $j=i-1\leq d-1$, we have
\begin{eqnarray*}
H^{i-1}(\Sigma,\mathcal{O}_{\Sigma}(k))&=&H^{i-1}(\Sigma,R^0\pi_*\mathcal{O}_{\wts}(kH))\\
&=&H^{i-1}(\wts,\mathcal{O}_{\wts}(kH))\\
&=&0
\end{eqnarray*}
and hence $H^i(\P^n,\I_{\Sigma}(k))=0$ for $k<0$ and $0\leq i\leq d+1$.  

To show $H^{d+1}(\Sigma,\mathcal{O}_{\Sigma}(k))=0$ for $k<0$, note that $$H^j(\Sigma,R^{d+1-j}\pi_*\mathcal{O}_{\wts}(kH))=H^{d+1-j}(X,\mathcal{O}_X)\tensor H^j(X,\mathcal{O}_X(k))=0$$ for $j<d$.  Thus $E_2^{0,d+1}=E_{\infty}^{0,d+1}=0$.  Looking at the $E_*^{d+1,0}$ terms, we have the complexes
$$E_i^{d+1-i,i-1}\stackrel{d_i}{\rightarrow}E_i^{d+1,0}\rightarrow 0$$
but we just proved that $E_i^{d+1-i,i-1}=E_2^{d+1-i,i-1}=0$, and hence $E_2^{d+1,0}=E_{\infty}^{d+1,0}$.  Now by \cite[6.1(1)]{sidver}, we have $E_2^{d+1,0}=0$.
\qed
\end{proof}

\begin{cor}\label{improvement}
Let $X\subset\P^n$ be a smooth, non-special curve satisfying $N_2$.  Then $\Sigma$ is ACM and $\I_{\Sigma}$ is $5$-regular.
\qed
\end{cor}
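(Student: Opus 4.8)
The plan is to import the hypotheses into the machinery of this section and then read off both conclusions from the vanishing of $H^i(\P^n,\I_{\Sigma}(k))$. Since $X$ is a non-special curve satisfying $N_2$, Proposition~\ref{starstarforcurves} with $p=2$ shows $X$ satisfies $N_2^{\Sigma}$; moreover $N_2$ includes projective normality of $X$, and the embedding forces $h^0(L)\geq 3$, so non-speciality gives $\deg L = h^0(L)+g-1\geq g+2$ and hence $\deg L^r\geq 2g+4>2g-2$ for $r\geq 2$. Thus $H^i(X,\mathcal{O}_X(r))=0$ for all $i,r\geq 1$ (the case $r=1$ being the non-speciality hypothesis). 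Since $d=1$ gives $N_{2d}^{\Sigma}=N_2^{\Sigma}$, the hypotheses of Theorem~\ref{projnor}, of the Proposition immediately preceding Corollary~\ref{firstreg}, of Corollary~\ref{firstreg}, and of Proposition~\ref{partial} all hold. In particular $\Sigma$ is projectively normal, and Corollary~\ref{firstreg} gives at once that $\I_{\Sigma}$ is $(2d+3)=5$-regular, which is the second assertion.

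For the first assertion, recall that a projective scheme of dimension $m$ is arithmetically Cohen--Macaulay precisely when $H^i(\P^n,\I_{\Sigma}(k))=0$ for $1\leq i\leq m$ and every $k\in\Z$; here $m=\dim\Sigma=2d+1=3$. I would verify this by splitting on the sign of $k$. For $k\geq 1$ it is exactly the Proposition preceding Corollary~\ref{firstreg}. For $k<0$ it is Proposition~\ref{partial}, whose range $0\leq i\leq d+2$ is precisely $0\leq i\leq 3$ when $d=1$. This leaves only $k=0$, where $H^1(\P^n,\I_{\Sigma})=0$ by connectedness of $\Sigma$ and, from the ideal-sheaf sequence, $H^i(\P^n,\I_{\Sigma})\cong H^{i-1}(\Sigma,\mathcal{O}_{\Sigma})$ for $i\geq 2$. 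So everything reduces to $H^1(\Sigma,\mathcal{O}_{\Sigma})=H^2(\Sigma,\mathcal{O}_{\Sigma})=0$.

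To compute these I would run the Leray spectral sequence for $\pi\colon\wts\to\Sigma$. Here $R^0\pi_*\mathcal{O}_{\wts}=\mathcal{O}_{\Sigma}$, $R^1\pi_*\mathcal{O}_{\wts}=H^1(X,\mathcal{O}_X)\tensor\mathcal{O}_X$, and $R^q\pi_*\mathcal{O}_{\wts}=0$ for $q\geq 2$, so only two rows survive; writing $V=H^1(X,\mathcal{O}_X)$, the sheaf $R^1\pi_*\mathcal{O}_{\wts}$ is supported on $X$ and $H^p(\Sigma,R^1\pi_*\mathcal{O}_{\wts})=V\tensor H^p(X,\mathcal{O}_X)$, equal to $V$ for $p=0$ and $V\tensor V$ for $p=1$. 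Since $\wts$ is a $\P^1$-bundle over $\H$ we have $H^\bullet(\wts,\mathcal{O}_{\wts})=H^\bullet(\H,\mathcal{O}_{\H})$, and as $\H$ is the symmetric square of the curve, $H^1(\H,\mathcal{O}_{\H})\cong V$ and $H^2(\H,\mathcal{O}_{\H})\cong\wedge^2 V$. The resulting two-row exact sequence is
\[
0\to H^1(\Sigma,\mathcal{O}_{\Sigma})\to V\xrightarrow{\ \alpha\ }V\to H^2(\Sigma,\mathcal{O}_{\Sigma})\to\wedge^2 V\xrightarrow{\ \beta\ }V\tensor V\to H^3(\Sigma,\mathcal{O}_{\Sigma})\to 0,
\]
with $\alpha,\beta$ the edge homomorphisms. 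Granting that $\alpha$ is an isomorphism and that $\beta$ is the canonical inclusion $\wedge^2 V\hookrightarrow V\tensor V$, one gets $H^1(\Sigma,\mathcal{O}_{\Sigma})=\ker\alpha=0$ and $H^2(\Sigma,\mathcal{O}_{\Sigma})=\ker\beta=0$, finishing the case $k=0$; note this also yields $H^3(\Sigma,\mathcal{O}_{\Sigma})\cong\operatorname{Sym}^2 V$, which correctly need not vanish.

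The step I expect to be the main obstacle is exactly the identification of the edge maps $\alpha$ and $\beta$: this is where the geometry of the $\P^1$-bundle $\wts\to\H$ and of the contraction $Z\cong\operatorname{Bl}_{\Delta}(X\times X)\to X$ of the exceptional divisor enters, rather than any formal property of the spectral sequence, and I would establish it by following the explicit cohomological computation behind \cite[6.1]{sidver}. The remaining steps are bookkeeping --- confirming that the hypotheses transfer when $d=1$ and that the three ranges of $k$ exhaust $\Z$ --- with the one caveat that the $k=0$ computation must use nothing beyond $N_2$ and non-speciality, since relaxing the positivity hypothesis is the entire point of the corollary.
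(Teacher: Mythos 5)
Your proposal is correct, and its skeleton is exactly the assembly the paper intends for this \qed'd corollary: Proposition~\ref{starstarforcurves} converts non-speciality plus $N_2$ into $N_2^{\Sigma}=N_{2d}^{\Sigma}$ (your degree estimate $\deg L^r\geq 2g+4$ correctly supplies the hypothesis $H^i(X,\mathcal{O}_X(r))=0$ for $i,r\geq1$), Corollary~\ref{firstreg} then gives $5$-regularity, the unnamed Proposition before it handles $H^i(\P^n,\I_{\Sigma}(k))$ for $k\geq1$, and Proposition~\ref{partial} handles $k<0$ in the range $0\leq i\leq d+2=3=\dim\Sigma$. Where you go beyond the paper is the twist $k=0$: none of the displayed propositions covers it (and note that Theorem~\ref{acm} and the non-ACM Proposition both assume $d\geq2$, so neither applies), and your two-row Leray computation is the right way to fill it. The paper instead delegates this silently to the curve papers \cite{sidver} and \cite{vermeiresecreg} --- see the remark immediately following the corollary, and the proof of Proposition~\ref{starstarforcurves}, which asserts that those arguments go through with the hypothesis $\deg L\geq 2g+3$ needed only for $H^1(\P^n,\I_{\Sigma}(b))$, $b\geq1$; the computation of $H^1(\Sigma,\mathcal{O}_{\Sigma})=H^2(\Sigma,\mathcal{O}_{\Sigma})=0$ in \cite[\S 6]{sidver} is indeed degree-free, so your citation discharges the step you flagged. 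For the record, the edge-map identifications you ``grant'' do hold: $H^1(\wts,\mathcal{O}_{\wts})\cong H^1(\H,\mathcal{O}_{\H})\cong V$ consists of the symmetric K\"unneth classes $(v,v)$ on $Z\cong\operatorname{Bl}_{\Delta}(X\times X)$, which restrict to $v$ on each fiber $\operatorname{Bl}_{(x,x)}(\{x\}\times X)\cong X$ of $\pi$, so $\alpha$ is (up to scalar) the identity; and $H^2(\H,\mathcal{O}_{\H})$ is the $\Z/2$-invariant part of $H^1(X,\mathcal{O}_X)\tensor H^1(X,\mathcal{O}_X)$, which because of the Koszul sign on odd-degree classes is $\wedge^2V$, mapping into $H^1(X,V\tensor\mathcal{O}_X)=V\tensor V$ by the canonical inclusion, so $\beta$ is injective. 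Your byproduct $H^3(\Sigma,\mathcal{O}_{\Sigma})\cong\operatorname{Sym}^2V$ matches the known answer for secant varieties of curves and is a good sanity check that ACM genuinely fails to force $H^3$ to vanish (consistent with $\dim\Sigma=3$). In short: same route as the paper, executed with one more degree of explicitness than the paper itself provides.
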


\begin{rmk}
Corollary~\ref{improvement} was proved for embeddings of degree at least $2g+3$ in \cite{sidver} and \cite{vermeiresecreg}.
\end{rmk}

\begin{prop}
Suppose $X^d\subset\P^n$ is a smooth variety satisfying $N_2^{\Sigma}$.  If $d\geq2$ and $H^i(X,\mathcal{O}_X)\neq0$ for some $i\geq1$, then $\Sigma$ is not ACM.
\end{prop}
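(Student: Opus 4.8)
The plan is to prove the contrapositive directly: I would exhibit a nonzero \emph{intermediate} cohomology group, i.e. some $H^j(\Sigma,\mathcal{O}_{\Sigma}(k))\neq0$ with $1\leq j\leq 2d=\dim\Sigma-1$. Since a Cohen--Macaulay homogeneous coordinate ring forces $H^j(\Sigma,\mathcal{O}_{\Sigma}(k))=0$ in exactly this range (for all $k$), any such nonvanishing shows $\Sigma$ is not ACM. The engine is the Leray spectral sequence for $\pi\colon\wts\rightarrow\Sigma$ applied to $\mathcal{O}_{\wts}(kH)=\pi^*\mathcal{O}_{\Sigma}(k)$: its $E_2$ page is $E_2^{p,0}=H^p(\Sigma,\mathcal{O}_{\Sigma}(k))$ and, for $q\geq1$, $E_2^{p,q}=H^q(X,\mathcal{O}_X)\tensor H^p(X,\mathcal{O}_X(k))$, using $R^q\pi_*\mathcal{O}_{\wts}=H^q(X,\mathcal{O}_X)\tensor\mathcal{O}_X$ from Notation and Terminology~\ref{maps}.

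First I would take $k\ll0$. Because $\wts$ is smooth and $\mathcal{O}_{\wts}(H)=\pi^*\mathcal{O}_{\Sigma}(1)$ is the pullback of an ample class under the birational morphism $\pi$, the line bundle $\pi^*\mathcal{O}_{\Sigma}(-k)$ is nef and big, so Serre duality on $\wts$ together with Kawamata--Viehweg vanishing \cite{kaw},\cite{vie} gives $H^m(\wts,\mathcal{O}_{\wts}(kH))=0$ for $0\leq m\leq 2d$. On the other side, Serre duality on $X$ shows $H^p(X,\mathcal{O}_X(k))=0$ for $p<d$ and $H^d(X,\mathcal{O}_X(k))\neq0$ once $-k\gg0$. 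Hence every $E_2^{p,q}$ with $q\geq1$ is concentrated in the single column $p=d$, and $E_2^{d,q}=H^q(X,\mathcal{O}_X)\tensor H^d(X,\mathcal{O}_X(k))$ is nonzero exactly when $H^q(X,\mathcal{O}_X)\neq0$. Inspecting the differentials, the only one meeting $E_2^{d,q}$ is the transgression $d_{q+1}\colon E_2^{d,q}\rightarrow E_2^{d+q+1,0}=H^{d+q+1}(\Sigma,\mathcal{O}_{\Sigma}(k))$; since $d+q\leq 2d$ the abutment $H^{d+q}(\wts,\mathcal{O}_{\wts}(kH))$ vanishes, forcing $E_{\infty}^{d,q}=\ker d_{q+1}=0$, so $d_{q+1}$ is injective and $H^{d+q+1}(\Sigma,\mathcal{O}_{\Sigma}(k))\neq0$. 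By hypothesis $H^q(X,\mathcal{O}_X)\neq0$ for some $q\geq1$, and necessarily $q\leq d$; if such a $q$ can be taken with $q\leq d-1$, then $d+q+1\in[\,d+2,\,2d\,]$, which is a genuine intermediate range \emph{precisely because} $d\geq2$, and $\Sigma$ is not ACM.

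This settles every case except the one in which $H^q(X,\mathcal{O}_X)=0$ for all $1\leq q\leq d-1$ while $H^d(X,\mathcal{O}_X)\neq0$ (for instance a regular surface of positive geometric genus). There the argument above produces only $H^{2d+1}(\Sigma,\mathcal{O}_{\Sigma}(k))\neq0$, i.e. top cohomology, which is no obstruction to being ACM; this is the place I expect the real difficulty. In this case the spectral sequence has just the two nonzero rows $q=0$ and $q=d$, so it degenerates to a long exact sequence tying together $H^{\bullet}(\Sigma,\mathcal{O}_{\Sigma}(k))$, $H^{\bullet}(\wts,\mathcal{O}_{\wts}(kH))$, and $H^d(X,\mathcal{O}_X)\tensor H^{\bullet}(X,\mathcal{O}_X(k))$. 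Assuming $\Sigma$ were ACM kills the intermediate $H^j(\Sigma,\mathcal{O}_{\Sigma}(k))$ and yields the identification $H^d(\wts,\mathcal{O}_{\wts}(kH))\cong H^d(X,\mathcal{O}_X)\tensor H^0(X,\mathcal{O}_X(k))$ for every $k\geq0$.

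The crux is then to contradict this last identification. I would compute the left side through the $\P^1$-bundle $\varphi\colon\wts\rightarrow\H$: since $\varphi_*\mathcal{O}_{\wts}(H)=\E$ and $R^1\varphi_*\mathcal{O}_{\wts}(kH)=0$ for $k\geq0$, one has $H^d(\wts,\mathcal{O}_{\wts}(kH))=H^d(\H,\operatorname{Sym}^k\E)$, reducing the problem to cohomology on $\H$. The decisive input should be the identification $H^{\bullet}(\H,\mathcal{O}_{\H})=\operatorname{Sym}^2 H^{\bullet}(X,\mathcal{O}_X)$, coming from the fact that $\H\rightarrow\operatorname{\bf Sym}^2X$ has rational singularities, together with an analysis of $\operatorname{Sym}^k\E$ for $k\geq1$; showing that $\dim H^d(\H,\operatorname{Sym}^k\E)$ cannot equal $h^d(X,\mathcal{O}_X)\cdot h^0(X,\mathcal{O}_X(k))$ for all $k$ is the main obstacle, and is exactly where the fine geometry of $\E$ on the Hilbert scheme must be used. (A clean alternative for the generic case, which runs into the same boundary case, is to assume $\Sigma$ Cohen--Macaulay and apply Grothendieck duality to $\pi$ together with Grauert--Riemenschneider vanishing: one gets $R\shom_{\mathcal{O}_{\Sigma}}(R\pi_*\mathcal{O}_{\wts},\omega_{\Sigma})\cong\pi_*\omega_{\wts}$ concentrated in degree $0$, whereas the local-Ext spectral sequence contributes $H^i(X,\mathcal{O}_X)^{*}\tensor\omega_X$ in cohomological degree $d+1-i$, nonzero and positive-degree whenever $1\leq i\leq d-1$ and $H^i(X,\mathcal{O}_X)\neq0$.)
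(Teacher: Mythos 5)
Your main argument is, step for step, the paper's own proof. For $k\ll0$ the paper considers the Leray spectral sequence $E_2^{a,b}=H^a(\Sigma,R^b\pi_*\mathcal{O}_{\wts}(kH))$, uses Kawamata--Viehweg to kill $H^j(\wts,\mathcal{O}_{\wts}(kH))$ for $j\leq 2d$ and Kodaira vanishing to concentrate each row $b=i\geq1$ in the single column $a=d$, and concludes that the transgression $d_{i+1}\colon E_{i+1}^{d,i}\rightarrow E_{i+1}^{d+i+1,0}$ is an isomorphism (you use only injectivity, which suffices), yielding
$$H^{d+i+1}(\Sigma,\mathcal{O}_{\Sigma}(k))\cong H^i(X,\mathcal{O}_X)\tensor H^d(X,\mathcal{O}_X(k))\neq0 \quad (k\ll0),$$
an intermediate nonvanishing exactly when $1\leq i\leq d-1$, using $d\geq2$. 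On that range your write-up and the paper's proof coincide in every essential respect.

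The boundary case you flag --- $H^i(X,\mathcal{O}_X)=0$ for $1\leq i\leq d-1$ but $H^d(X,\mathcal{O}_X)\neq0$, e.g.\ a regular surface with $p_g>0$ --- is genuinely outside this argument, but you should know that the paper's proof does not close it either: the paper asserts $E_{\infty}^{d,i}=E_{\infty}^{d+i+1,0}=0$ only for $i\leq d-1$, and for $i=d$ its conclusion $H^{2d+2}(\P^n,\I_{\Sigma}(k))\neq0$ amounts to $H^{2d+1}(\Sigma,\mathcal{O}_{\Sigma}(k))\neq0$, which is top cohomology of the $(2d+1)$-dimensional $\Sigma$ and is no obstruction to being ACM. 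So your instinct that this is where the real difficulty lies is exactly right; neither of your sketched continuations (the $\operatorname{Sym}^k\E$ computation on $\H$, whose key input $H^{\bullet}(\H,\mathcal{O}_{\H})=\operatorname{Sym}^2H^{\bullet}(X,\mathcal{O}_X)$ you do not verify, nor the Grothendieck duality route, which as you yourself note again only reaches $1\leq i\leq d-1$) completes it, but nothing in the paper does either. Relative to the paper you have not missed an idea: you have reproduced its proof and, to your credit, made explicit a case that the paper's proof silently omits.
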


\begin{proof}
Suppose $H^i(X,\mathcal{O}_X)\neq0$ and consider the spectral sequence with $E_2^{a,b}=H^a(\Sigma,R^b\pi_*\mathcal{O}_{\wts}(kH))$ for $k<0$.  It is straightforward to check that 
$E_{i+2}^{d,i}=E_{\infty}^{d,i}$ and that $E_{i+2}^{d+i+1,0}=E_{\infty}^{d+i+1,0}$; from the fact that $H^{j}(\wts,\mathcal{O}_{\wts}(k))=0$ for $j\leq 2d$, we know that $E_{\infty}^{d,i}=E_{\infty}^{d+i+1,0}=0$ for $i\leq d-1$.  Therefore, from the complex
$$0\rightarrow E_{i+1}^{d,i}\stackrel{d_{i+1}}{\rightarrow}E_{i+1}^{d+i+1,0}\rightarrow 0$$
we see that the nontrivial map is actually an isomorphism, hence we have
\begin{eqnarray*}
H^{d+i+2}(\P^n,\I_{\Sigma}(k))&=&H^{d+i+1}(\Sigma,\mathcal{O}_{\Sigma}(k))\\
&=&H^{d+i+1}(\Sigma,R^0\pi_*\mathcal{O}_{\wts}(kH))\\
&=&E_2^{d+i+1,0}\\
&=&E_{i+1}^{d+i+1,0}\\
&=&E_{i+1}^{d,i}\\
&=&E_{2}^{d,i}\\
&=&H^d(\Sigma,R^i\pi_*\mathcal{O}_{\wts}(kH))\\
&=&H^d(X,H^i(X,\mathcal{O}_X)\tensor\mathcal{O}_X(k))\\
&=&H^{i}(X,\mathcal{O}_X)\tensor H^d(X,\mathcal{O}_X(k))
\end{eqnarray*}
However, as $k<0$ we know that $H^d(X,\mathcal{O}_X(k))\neq0$ for all $k<<0$, thus $\Sigma$ is not ACM.
\qed
\end{proof}

\begin{cor}
Suppose $X^d\subset\P^n$ is a smooth variety satisfying $N_2^{\Sigma}$. 
If $H^j(X,\mathcal{O}_X)=0$ for $j>0$, then $H^i(\P^n,\I_{\Sigma}(k))=0$ for $k<0$, $0\leq i\leq 2d+1$.
\qed
\end{cor}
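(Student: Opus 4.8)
The plan is to reduce the vanishing of $H^i(\P^n,\I_\Sigma(k))$ to a vanishing on $\wts$ that is already available via Kawamata--Viehweg from the proof of Proposition~\ref{partial}, exploiting the extra hypothesis $H^j(X,\O_X)=0$ for $j>0$ to collapse the relevant spectral sequence. I first dispose of the trivial cases: for $i=0$ the group $H^0(\P^n,\I_\Sigma(k))$ embeds in $H^0(\P^n,\O_{\P^n}(k))=0$ for $k<0$, and if $2d+1=n$ then $\Sigma=\P^n$, so $\I_\Sigma=0$ and there is nothing to prove. Hence I may assume $2d+1<n$ and take $1\le i\le 2d+1$.

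Next I would pass from the ideal sheaf to the structure sheaf of $\Sigma$. From the sequence $\ses{\I_\Sigma(k)}{\O_{\P^n}(k)}{\O_\Sigma(k)}$, and using that for $k<0$ one has $H^{i-1}(\P^n,\O_{\P^n}(k))=H^i(\P^n,\O_{\P^n}(k))=0$ throughout the range $i\le 2d+1<n$, the connecting homomorphism gives an isomorphism $H^i(\P^n,\I_\Sigma(k))\cong H^{i-1}(\Sigma,\O_\Sigma(k))$. Thus it suffices to prove $H^j(\Sigma,\O_\Sigma(k))=0$ for $0\le j\le 2d$ and $k<0$.

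Now the hypothesis enters. Since $\O_{\wts}(kH)=\pi^*\O_\Sigma(k)$ and, by Notation and Terminology~\ref{maps}, $R^b\pi_*\O_{\wts}=H^b(X,\O_X)\tensor\O_X$ for $b\ge1$, the projection formula yields $R^b\pi_*\O_{\wts}(kH)=H^b(X,\O_X)\tensor\O_X(k)$, which vanishes for every $b\ge1$ under the assumption $H^b(X,\O_X)=0$. Because $\Sigma$ is normal we have $R^0\pi_*\O_{\wts}(kH)=\O_\Sigma(k)$, so the Leray spectral sequence for $\pi$ degenerates at $E_2$ and $H^j(\Sigma,\O_\Sigma(k))=H^j(\wts,\O_{\wts}(kH))$. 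Finally, applying Serre duality on the smooth $(2d+1)$-fold $\wts$ to the nef and big divisor $-kH=|k|H$, the Kawamata--Viehweg vanishing already recorded in the proof of Proposition~\ref{partial} gives $H^j(\wts,\O_{\wts}(kH))=0$ precisely for $0\le j\le 2d$, which is exactly the range required.

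I do not expect a genuine obstacle: the real content is the observation that $H^j(X,\O_X)=0$ annihilates all the higher direct images $R^b\pi_*\O_{\wts}(kH)$, so the spectral sequence used in the preceding non-ACM proposition collapses and its delicate differential analysis becomes unnecessary. The only points needing care are the elementary translation between $\I_\Sigma$ and $\O_\Sigma$ (for which the degenerate case $2d+1=n$ must be set aside) and the bookkeeping that the Kawamata--Viehweg range $0\le j\le 2d$ matches the index range $1\le i\le 2d+1$ of the statement.
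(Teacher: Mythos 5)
Your proof is correct and follows essentially the same route the paper intends for this corollary (which it states with no written proof, as an immediate consequence of the preceding analysis): the hypothesis $H^j(X,\mathcal{O}_X)=0$ kills all higher direct images $R^b\pi_*\mathcal{O}_{\widetilde{\Sigma}}(kH)=H^b(X,\mathcal{O}_X)\otimes\mathcal{O}_X(k)$, so the Leray spectral sequence from Proposition~\ref{partial} degenerates and Kawamata--Viehweg on $\widetilde{\Sigma}$ gives the vanishing in the full range $0\leq i\leq 2d+1$. Your handling of the reduction $H^i(\P^n,\I_{\Sigma}(k))\cong H^{i-1}(\Sigma,\mathcal{O}_{\Sigma}(k))$, including the degenerate case $2d+1=n$, is a correct (and slightly more careful) rendering of the same argument.
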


\begin{thm}\label{acm}
Suppose $X^d\subset\P^n$ is a smooth variety of dimension $d\geq2$.  Suppose $X\subset\P^n$ is projectively normal and satisfies $N_{2d}^{\Sigma}$, and that $H^i(X,\mathcal{O}_X(r))=0$ for $i,r\geq1$.  Then the following are equivalent:
\begin{enumerate}
\item $H^j(X,\mathcal{O}_X)=0$ for $j>0$.
\item $\Sigma$ is ACM.
\item $\Sigma$ has rational singularities.
\end{enumerate}
Further, if one of these conditions is satisfied, then $\I_{\Sigma}$ is $(2d+1)$-regular.
\end{thm}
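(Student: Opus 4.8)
The plan is to establish the three-way equivalence by proving the cycle $(1)\Rightarrow(2)\Rightarrow(3)\Rightarrow(1)$, drawing on the cohomological machinery already assembled in this section, and then deduce the regularity statement from the strengthened vanishing that condition $(1)$ provides. The backbone of the argument is the relationship $H^i(\P^n,\I_\Sigma(k))=H^i(B_2,\mathcal{O}(kH-E_1-E_2))$ from Lemma~\ref{change}, together with the Leray spectral sequence for $\pi:\wts\rightarrow\Sigma$ whose $E_2$-page is controlled by $R^i\pi_*\mathcal{O}_{\wts}=H^i(X,\mathcal{O}_X)\tensor\mathcal{O}_X$ as recorded in Notation and Terminology~\ref{maps}.

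\textbf{The implication $(1)\Rightarrow(2)$.} Assuming $H^j(X,\mathcal{O}_X)=0$ for $j>0$, I would combine the two one-sided vanishing results already in place. The Proposition preceding Corollary~\ref{firstreg} gives $H^i(\P^n,\I_\Sigma(k))=0$ for $i,k\geq1$, while the Corollary immediately preceding this theorem gives $H^i(\P^n,\I_\Sigma(k))=0$ for $k<0$ and $0\leq i\leq 2d+1$. Since $\dim\Sigma=2d+1$, these together cover all the intermediate cohomology $H^i(\Sigma,\mathcal{O}_\Sigma(k))$ for $0<i<2d+1$ and all $k$, which is exactly the Cohen--Macaulay condition once projective normality (Theorem~\ref{projnor}) is known. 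I would spell out the passage between $H^i(\P^n,\I_\Sigma(k))$ and $H^{i-1}(\Sigma,\mathcal{O}_\Sigma(k))$ via the ideal sheaf sequence, using that $\P^n$ has no intermediate cohomology.

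\textbf{The implications $(2)\Leftrightarrow(3)$ and the converse to $(1)$.} Here I would use that $\pi:\wts\rightarrow\Sigma$ is a resolution of singularities with $\wts$ smooth, so that $\Sigma$ has rational singularities precisely when $R^i\pi_*\mathcal{O}_{\wts}=0$ for $i>0$; by the formula $R^i\pi_*\mathcal{O}_{\wts}=H^i(X,\mathcal{O}_X)\tensor\mathcal{O}_X$, this holds if and only if $H^i(X,\mathcal{O}_X)=0$ for all $i>0$, giving $(3)\Leftrightarrow(1)$ directly. The remaining link $(2)\Rightarrow(1)$ (equivalently, $\neg(1)\Rightarrow\neg(2)$) is furnished by the Proposition stating that if $d\geq2$ and $H^i(X,\mathcal{O}_X)\neq0$ for some $i\geq1$ then $\Sigma$ is not ACM; this is why the hypothesis $d\geq2$ appears. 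Thus the cycle closes.

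\textbf{The main obstacle and the regularity claim.} I expect the genuine work to lie not in the equivalence itself --- which is largely an assembly of the preceding propositions --- but in sharpening the regularity bound from the $(2d+3)$ of Corollary~\ref{firstreg} to $(2d+1)$ under condition $(1)$. Corollary~\ref{firstreg} used only the vanishing of positive-degree cohomology; under $(1)$ I additionally have vanishing of all intermediate cohomology in negative degrees up to $i=2d+1$, so I would recompute the Castelnuovo--Mumford regularity by checking $H^i(\P^n,\I_\Sigma(k-i))=0$ for $i\geq1$ with the improved range, shaving off two degrees precisely because the local cohomology contributions from $H^i(X,\mathcal{O}_X)$ with $i>0$ that forced the weaker bound now vanish. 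The delicate point will be bookkeeping the top-degree terms of the spectral sequence near $i=2d+1$ to confirm no obstruction survives at degree $k=2d+1$.
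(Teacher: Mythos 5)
Your overall architecture --- the cycle $(1)\Rightarrow(2)$, $(2)\Rightarrow(1)$ via the non-ACM Proposition (which is indeed why $d\geq2$ appears), and $(1)\Leftrightarrow(3)$ via $R^i\pi_*\mathcal{O}_{\wts}=H^i(X,\mathcal{O}_X)\tensor\mathcal{O}_X$ --- matches the intended structure, but there is a genuine gap at the heart of your step $(1)\Rightarrow(2)$: the twist $k=0$. The Proposition preceding Corollary~\ref{firstreg} gives $H^i(\P^n,\I_{\Sigma}(k))=0$ only for $k\geq1$, and the Corollary immediately preceding the theorem gives it only for $k<0$; contrary to your claim that these ``together cover all the intermediate cohomology for all $k$,'' neither covers $k=0$, i.e.\ the vanishing $H^i(\Sigma,\mathcal{O}_{\Sigma})=0$ for $1\leq i\leq 2d$. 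That vanishing is in fact the \emph{entire content} of the paper's proof: by \cite[3.10]{sidver} one has $H^i(Z,\mathcal{O}_Z)\cong H^i(\wts,\mathcal{O}_{\wts})\oplus H^{i+1}(\wts,\mathcal{O}_{\wts}(-E_1))$; since $Z\cong\operatorname{Bl}_{\Delta}(X\times X)$, hypothesis $(1)$ and K\"unneth give $H^i(Z,\mathcal{O}_Z)=0$ for $i\geq1$, whence $H^i(\wts,\mathcal{O}_{\wts})=0$; and since $(1)$ also kills $R^i\pi_*\mathcal{O}_{\wts}$, Leray gives $H^i(\Sigma,\mathcal{O}_{\Sigma})=H^i(\wts,\mathcal{O}_{\wts})=0$. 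Note that the rational-singularities mechanism you invoke for $(1)\Leftrightarrow(3)$ does not close this gap by itself: $R^i\pi_*\mathcal{O}_{\wts}=0$ only \emph{identifies} $H^i(\Sigma,\mathcal{O}_{\Sigma})$ with $H^i(\wts,\mathcal{O}_{\wts})$; one still needs a reason the latter vanishes, and that is exactly what the passage through $Z$ supplies.

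Your gap propagates into the regularity claim, and you also mislocate the remaining work there. Granting ACM, $(2d+1)$-regularity requires $H^i(\P^n,\I_{\Sigma}(2d+1-i))=0$ for all $i\geq1$: the range $1\leq i\leq 2d$ is the positive-twist Proposition; $i=2d+1$ is precisely the $k=0$ vanishing discussed above; and $i=2d+2$ asks for $H^{2d+1}(\Sigma,\mathcal{O}_{\Sigma}(-1))=0$, which is top-dimensional cohomology of $\Sigma$ and hence \emph{not} covered by the negative-twist Corollary you cite (it stops at $i\leq 2d+1$). This last vanishing does hold: under $(1)$, Leray and the projection formula give $H^{2d+1}(\Sigma,\mathcal{O}_{\Sigma}(-1))=H^{2d+1}(\wts,\mathcal{O}_{\wts}(-H))$, which by Serre duality on the smooth $(2d+1)$-fold $\wts$ is dual to $H^0(\wts,\omega_{\wts}\tensor\mathcal{O}_{\wts}(H))$, and this vanishes because $\omega_{\wts}\tensor\mathcal{O}_{\wts}(H)$ has degree $-1$ on the fibers of the $\P^1$-bundle $\varphi:\wts\rightarrow\H$. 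Your proposal gestures at ``bookkeeping the top-degree terms'' but does not supply this argument --- though in fairness the paper's own terse proof omits the regularity bookkeeping as well.
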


\begin{proof}
Clearly, $H^i(\P^n,\I_{\Sigma})=0$ for $0\leq i\leq1$.  Thus we are left to show $H^i(\Sigma,\mathcal{O}_{\Sigma})=0$ for $1\leq i\leq2d$.  By \cite[3.10]{sidver} we have $H^i(Z,\mathcal{O}_Z)\cong H^i(\wts,\mathcal{O}_{\wts})\oplus H^{i+1}(\wts,\mathcal{O}_{\wts}(-E_1))$.  By hypothesis we have $H^i(Z,\mathcal{O}_Z)=0$ for $i\geq0$, hence $H^i(\wts,\mathcal{O}_{\wts})=0$ for $i\geq0$.  However, our hypothesis also implies that $R^i\pi_*\mathcal{O}_{\wts}=0$ for $i\geq1$, hence $H^i(\wts,\mathcal{O}_{\wts})=H^i(\Sigma,\mathcal{O}_{\Sigma})=0$. 
\qed
\end{proof}

\begin{remark}
\textit{Macaulay 2} \cite{M2} calculations performed by Jessica Sidman show that for $v_3(\P^2)$ and for $v_4(\P^2)$, $\Sigma$ is $5$-regular but not $4$-regular. 
\end{remark}

\section{Syzygies}

Having established the basic normality and regularity results, following \cite{mgreen} we turn our attention to defining equations and syzygies.

Our starting point is the familiar:

\begin{prop}\label{basic}
Let $X\subset\P^n$ be a smooth variety embedded by a line bundle $L$.  Then $\Sigma$ satisfies $\operatorname{N}_{3,p}$ if $H^1(\Sigma,\wedge^aM_L(b))=0$, $2\leq a\leq p+1$, $b\geq 2$.  

\end{prop}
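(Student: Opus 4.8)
The plan is to run the standard translation of the condition $N_{3,p}$ into the vanishing of Koszul cohomology, and then to compute those Koszul groups through the syzygy bundle $M_L$. Write $V=H^0(\Sigma,\O_\Sigma(1))=H^0(X,L)$ and recall the defining sequence of the kernel bundle on $\Sigma$, namely $\ses{M_L}{V\tensor\O_\Sigma}{\O_\Sigma(1)}$. Since $\Sigma$ is projectively normal and $H^1(\Sigma,\O_\Sigma(m))=0$ for $m\geq 1$ (the inputs supplied by the preceding sections), the minimal free resolution of the homogeneous coordinate ring of $\Sigma$ is computed by the Koszul groups $K_{a,b}(\Sigma,\O_\Sigma(1))$, and $N_{3,p}$ is exactly the assertion that the part of the Betti table strictly above the cubic linear strand vanishes, i.e. $K_{a,b}(\Sigma,\O_\Sigma(1))=0$ for $1\leq a\leq p$ and $b\geq 3$.

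The engine is the familiar dictionary between Koszul cohomology and the exterior powers of $M_L$. Taking $(a+1)$-st exterior powers of the defining sequence and using that $\O_\Sigma(1)$ is a line bundle yields
$$\ses{\wedge^{a+1}M_L}{\wedge^{a+1}V\tensor\O_\Sigma}{\wedge^{a}M_L(1)}.$$
Twisting by $\O_\Sigma(b-1)$ and passing to the long exact cohomology sequence, projective normality identifies the image of $\wedge^{a+1}V\tensor H^0(\O_\Sigma(b-1))$ with the image of the Koszul differential, while $H^0(\wedge^a M_L(b))$ is the space of Koszul cycles; since $H^1(\Sigma,\O_\Sigma(b-1))=0$ for $b\geq 3$, this produces the isomorphism $K_{a,b}(\Sigma,\O_\Sigma(1))\cong H^1(\Sigma,\wedge^{a+1}M_L(b-1))$.

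With this in hand the proposition becomes pure bookkeeping. Reindexing by $a'=a+1$ and $b'=b-1$, the target vanishing $K_{a,b}=0$ for $1\leq a\leq p$, $b\geq 3$ is exactly $H^1(\Sigma,\wedge^{a'}M_L(b'))=0$ for $2\leq a'\leq p+1$ and $b'\geq 2$, which is the hypothesis. Here the case $a'=2$ records generation in degree $\leq 3$ and the cases $3\leq a'\leq p+1$ record linearity of the next $p-1$ syzygy steps, so the two endpoints of the stated range fit together precisely.

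The main obstacle is not any single estimate but the careful matching of indices: one must track the shift by one in internal degree that distinguishes the cubic generation encoded in $N_{3,p}$ from the usual quadric-generated $N_p$, which is what turns the naive ``$b\geq 1$'' into ``$b\geq 2$'' and starts the range of $a$ at $2$ rather than $1$. I would also take care to confirm that the auxiliary inputs on which the identification rests, namely projective normality of $\Sigma$ together with $H^1(\Sigma,\O_\Sigma(m))=0$ for $m\geq 1$, are genuinely in force over the needed range, so that $K_{a,b}$ really is computed by $H^1(\wedge^{a+1}M_L(b-1))$ with no stray contribution from the higher cohomology of $\O_\Sigma(b-1)$.
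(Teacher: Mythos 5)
Your translation runs on the same engine as the paper's proof---the dictionary between graded Betti numbers in the cubic strand and $H^1$ of twisted exterior powers of $M_L$, with the index bookkeeping ($a'=a+1$, $b'=b-1$, so that $N_{3,p}$ becomes vanishing for $1\leq a\leq p$, $b\geq 3$) handled correctly---but as written it imports two inputs that the proposition does not grant you. The minor one: you require $H^1(\Sigma,\mathcal{O}_{\Sigma}(b-1))=0$ in order to get an \emph{isomorphism} $K_{a,b}\cong H^1(\Sigma,\wedge^{a+1}M_L(b-1))$. You only ever use the direction ``$H^1$ vanishes $\Rightarrow$ Koszul group vanishes,'' and for that the unconditional \emph{injection} of the Koszul group into $H^1(\Sigma,\wedge^{a+1}M_L(b-1))$ suffices. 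This is exactly how the paper proceeds: citing \cite[5.8]{eisenbud} applied to $F=\oplus_n\Gamma(\Sigma,\mathcal{O}_{\Sigma}(n))$ gives the exact sequence $0\to\Tor_{a-1}(F,k)_{a+b}\to H^1(\Sigma,\wedge^{a}M_L(b))\to\wedge^{a}\Gamma(\mathcal{O}(1))\tensor H^1(\Sigma,\mathcal{O}_{\Sigma}(b))$, in which the third term absorbs any failure of the vanishing you postulate, so that hypothesis can simply be dropped.

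The serious issue is your appeal to projective normality of $\Sigma$ as ``supplied by the preceding sections.'' It is not: the proposition is stated for an arbitrary smooth embedded variety with only the $H^1(\Sigma,\wedge^aM_L(b))$ vanishings as hypotheses, and since those begin at $a=2$ they carry no information at the $\Tor_0$ level, i.e., about normal generation. Moreover, in this paper the dependence runs the other way: Theorem~\ref{projnor} defers to vanishings established in Theorem~\ref{forgeneral}, and Theorem~\ref{forgeneral} converts its vanishings into the conclusion $N_{3,p}$ precisely through this proposition---note that Theorem~\ref{forgeneral} does not assume $X$ (let alone $\Sigma$) projectively normal, while Theorem~\ref{projnor} does. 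So if Proposition~\ref{basic} presupposed projective normality of $\Sigma$, you would either risk a circle or leave Theorem~\ref{forgeneral} proving a strictly weaker statement than asserted. The repair is the paper's device: work with the section module $F$ and its graded $\Tor$ directly, so the hypothesis alone yields $\Tor_{a-1}(F,k)_{a+b}=0$ for $2\leq a\leq p+1$, $b\geq2$, which is what bounds the degrees of generators and the linearity of the syzygy steps. (There remains the caveat---which the paper's own phrase ``the first syzygies of $\mathcal{O}_{\Sigma}$, which are the generators of the ideal of $\Sigma$'' also elides---that reading syzygies of $\I_{\Sigma}$ off $\Tor_*(F,k)$ uses that $F$ is generated in degree zero; but that is settled by the normality results proved independently of this proposition, not assumed inside it.)
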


\begin{proof}
Because $L$ also induces an embedding $\Sigma\subset\P^n$, we abuse notation and denote the associated vector bundle on $\Sigma$ by $M_L$.
Letting $F=\oplus\Gamma(\Sigma_1,\mathcal{O}_{\Sigma}(n))$ and
applying \cite[5.8]{eisenbud} to $\mathcal{O}_{\Sigma}$ gives the exact sequence:
\[
0 \to \Tor_{a-1}(F, k)_{a+b} \to H^1(\Sigma_1,\wedge^{a} M_L(b)) \to H^1(\Sigma,\wedge^{a}\mathcal{O}_{\P}^{r+1} \otimes \mathcal{O}_{\Sigma_1}(b))
\]
The vanishing in the hypothesis implies that $\Tor_1(F,k)_d=0$ for $d \geq k+1$, and hence that the first syzygies of $\mathcal{O}_{\Sigma}$, which are the generators of the ideal of $\Sigma$, are in degree $\leq k$.  The rest of the vanishings yield the analogous statements for higher syzygies.

\nopagebreak \hfill $\Box$ \par \medskip
\end{proof}

The remaining technical portion of the paper is devoted to reinterpreting the vanishings in Proposition~\ref{basic} in terms of vanishings on the Hilbert scheme $\H$, and then finally on $X$ itself.

\begin{prop}\label{prop: wts trans}
If $X$ is a smooth variety embedded by a $3$-very ample line bundle $L$ satisfying $N_{2,2}$, then $\Sigma$ satisfies $\operatorname{N}_{3,p}$ if $$H^1(\wts,\pi^*\wedge^aM_L(b))\rightarrow H^0(\Sigma,\wedge^aM_L(b)\tensor R^1\pi_*\mathcal{O}_{\wts})$$ is injective for $2\leq a\leq p+1$, $b\geq 2$.

\end{prop}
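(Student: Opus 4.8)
The plan is to identify $H^1(\Sigma,\wedge^aM_L(b))$ with a subspace of $H^1(\wts,\pi^*\wedge^aM_L(b))$ via the Leray spectral sequence for $\pi\colon\wts\to\Sigma$, so that the hypothesized injectivity forces this group to vanish; Proposition~\ref{basic} then immediately delivers $\operatorname{N}_{3,p}$. In effect, the proposition is a bookkeeping device that transports the vanishing criterion of Proposition~\ref{basic} from $\Sigma$ up to $\wts$, where the cohomology is more accessible.

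First I would record the relevant direct-image computations. Since $\wedge^aM_L(b)$ is locally free on $\Sigma$ and $\Sigma$ is normal (\cite[2.2]{vermeiresing}), we have $\pi_*\mathcal{O}_{\wts}=\mathcal{O}_{\Sigma}$, and the projection formula gives
$$\pi_*\left(\pi^*\wedge^aM_L(b)\right)=\wedge^aM_L(b), \qquad R^q\pi_*\left(\pi^*\wedge^aM_L(b)\right)=\wedge^aM_L(b)\tensor R^q\pi_*\mathcal{O}_{\wts} \ \ (q\geq1).$$
Next I would invoke the five-term exact sequence of the Leray spectral sequence $E_2^{p,q}=H^p(\Sigma,R^q\pi_*(\pi^*\wedge^aM_L(b)))\Rightarrow H^{p+q}(\wts,\pi^*\wedge^aM_L(b))$, which, after the substitutions above, reads
$$0\to H^1(\Sigma,\wedge^aM_L(b))\to H^1(\wts,\pi^*\wedge^aM_L(b))\to H^0(\Sigma,\wedge^aM_L(b)\tensor R^1\pi_*\mathcal{O}_{\wts})\to H^2(\Sigma,\wedge^aM_L(b)).$$
The left edge map realizes $H^1(\Sigma,\wedge^aM_L(b))$ as the kernel of the middle arrow, which is precisely the map in the statement. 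Hence the asserted injectivity for $2\leq a\leq p+1$, $b\geq2$ is equivalent to $H^1(\Sigma,\wedge^aM_L(b))=0$ in that range, and Proposition~\ref{basic} then yields $\operatorname{N}_{3,p}$.

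The argument carries no genuine obstacle: the real content is deferred to the verification of the injectivity itself, which is the task of Theorem~\ref{forgeneral}. The only points demanding care here are that the projection formula applies (ensured by local freeness of $\wedge^aM_L(b)$) and that $\pi_*\mathcal{O}_{\wts}=\mathcal{O}_{\Sigma}$ (ensured by normality of $\Sigma$); once these are in place, the spectral-sequence identification is formal.
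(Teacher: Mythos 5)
Your proof is correct and is essentially identical to the paper's: both invoke the five-term exact sequence of the Leray spectral sequence for $\pi\colon\wts\rightarrow\Sigma$, identify $H^1(\Sigma,\wedge^aM_L(b))$ with the kernel of the map to $H^0(\Sigma,\wedge^aM_L(b)\tensor R^1\pi_*\mathcal{O}_{\wts})$, and conclude via Proposition~\ref{basic}. The only difference is that you spell out the projection-formula and $\pi_*\mathcal{O}_{\wts}=\mathcal{O}_{\Sigma}$ bookkeeping that the paper leaves implicit, which is a harmless (indeed welcome) elaboration.
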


\begin{proof}
This follows immediately from the start of the 5-term sequence associated to the Leray-Serre spectral sequence:
$$0\rightarrow H^1(\Sigma,\wedge^aM_L(b))\rightarrow H^1(\wts,\pi^*\wedge^aM_L(b))\rightarrow H^0(\Sigma,\wedge^aM_L(b)\tensor R^1\pi_*\mathcal{O}_{\wts})$$
and Proposition~\ref{basic}.
\qed
\end{proof}


\begin{prop}\label{prop: wts van}
Let $X\subset\P^n$ be a smooth variety embedded by a line bundle $L$ satisfying $N_{p}^{\Sigma}$ with $H^i(X,L^k)=0$ for $i,k\geq1$.  Then $\Sigma$ satisfies $\operatorname{N}_{3,p}$ if $H^i(\wts,\pi^*\wedge^{a-1+i}M_L\tensor\mathcal{O}(2H-E))=0$ for $2\leq a\leq p+1$, $i\geq1$.


\end{prop}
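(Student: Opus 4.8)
The plan is to verify the injectivity criterion of Proposition~\ref{prop: wts trans}: for $2\le a\le p+1$ and $b\ge 2$ I must show that the edge map
$$e\colon H^1(\wts,\pi^*\wedge^aM_L(bH))\to H^0(\Sigma,\wedge^aM_L(bH)\tensor R^1\pi_*\mathcal{O}_{\wts})$$
is injective, where on $\wts$ we write $\mathcal{O}(bH)=\pi^*\mathcal{O}_{\Sigma}(b)$. Writing $\F=\wedge^aM_L(bH)$, the strategy is to factor $e$ through restriction to the exceptional divisor $Z=E$ of $\pi$. Since $\pi|_Z=\pi_1\colon Z\to X$ and $R^1\pi_*\mathcal{O}_{\wts}=H^1(X,\mathcal{O}_X)\tensor\mathcal{O}_X=R^1\pi_{1*}\mathcal{O}_Z$ (Notation and Terminology~\ref{maps}), the target of $e$ is canonically $H^0(X,\F|_X\tensor H^1(X,\mathcal{O}_X))$, which is also the target of the corresponding edge map $e_1$ for the fibration $\pi_1$. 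Naturality of the Leray five-term sequence for the square $Z\hookrightarrow\wts$, $X\hookrightarrow\Sigma$ then gives $e=e_1\circ\rho$, where $\rho\colon H^1(\wts,\pi^*\F)\to H^1(Z,(\pi^*\F)|_Z)=H^1(Z,\pi_1^*(\F|_X))$ is restriction. Thus it suffices to prove that both $\rho$ and $e_1$ are injective.

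For $\rho$, the sequence $\ses{\pi^*\F(-E)}{\pi^*\F}{(\pi^*\F)|_Z}$ shows $\rho$ is injective once $H^1(\wts,\pi^*\wedge^aM_L(bH-E))=0$. I will establish this by a Koszul induction that trades the twist $bH-E$ for $2H-E$. Pulling the tautological sequence $\ses{M_L}{V\tensor\mathcal{O}}{\mathcal{O}(1)}$ (with $V=H^0(\mathcal{O}_{\Sigma}(1))$) back to $\wts$, taking exterior powers, and twisting by $(b'H-E)$ yields
$$\ses{\pi^*\wedge^{m}M_L(b'H-E)}{\wedge^{m}V\tensor\mathcal{O}(b'H-E)}{\pi^*\wedge^{m-1}M_L((b'+1)H-E)}.$$
By condition~(2) of $N^{\Sigma}_p$ (Notation~\ref{starstar}) the middle term is acyclic in positive degrees whenever $b'\ge 1$, so the connecting maps give isomorphisms $H^i(\pi^*\wedge^{m-1}M_L((b'+1)H-E))\cong H^{i+1}(\pi^*\wedge^{m}M_L(b'H-E))$ for $i\ge 1$. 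Iterating $b-2$ times, starting from $H^1(\pi^*\wedge^aM_L(bH-E))$ and raising the exterior power and the cohomological degree in lockstep while lowering the twist, lands exactly on $H^{b-1}(\wts,\pi^*\wedge^{a-1+(b-1)}M_L(2H-E))$, i.e. on the group $H^i(\wts,\pi^*\wedge^{a-1+i}M_L(2H-E))$ with $i=b-1\ge 1$, which vanishes by hypothesis. Hence $H^1(\wts,\pi^*\wedge^aM_L(bH-E))=0$ and $\rho$ is injective.

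For $e_1$, the five-term sequence of $\pi_1$ identifies $\ker e_1$ with $H^1(X,\F|_X)=H^1(X,\wedge^aM_L(bH)|_X)$. Because $\Sigma$ and $X$ are both linearly normal, $M_L|_X$ is the usual syzygy bundle of $(X,L)$, so $\F|_X$ is built from it; since $X$ satisfies $N_{2,p}$ and $H^i(X,L^k)=0$ for $i,k\ge 1$, the classical cohomological form of property $N_{2,p}$ gives $H^1(X,\wedge^aM_L(bH)|_X)=0$ for $2\le a\le p+1$ and $b\ge 1$. Thus $e_1$ is injective, and combined with the injectivity of $\rho$ this yields injectivity of $e$; Proposition~\ref{prop: wts trans} then delivers $N_{3,p}$.

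The main obstacle is the bridge between the ambient twist $bH$ appearing in the edge map and the twist $2H-E$ controlled by the hypothesis. Concretely, one must check that the naturality square identifying $e$ with $e_1\circ\rho$ genuinely commutes with the canonical identification $R^1\pi_*\mathcal{O}_{\wts}\cong R^1\pi_{1*}\mathcal{O}_Z$ of the two edge-map targets (equivalently, that $R^q\pi_*\mathcal{O}_{\wts}(-E)=0$ for $q\ge 1$, so that restriction to $Z$ does not lose the relevant $H^1$), and to keep the index bookkeeping in the Koszul induction exactly aligned so that the terminal group is precisely the assumed-vanishing one $H^i(\wts,\pi^*\wedge^{a-1+i}M_L(2H-E))$. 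The remaining inputs — acyclicity of the middle Koszul terms and the $X$-level $N_{2,p}$ vanishing — are then routine given $N^{\Sigma}_p$ and $H^i(X,L^k)=0$.
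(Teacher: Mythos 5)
Your proposal is correct and takes essentially the same route as the paper's proof: both reduce, via Proposition~\ref{prop: wts trans}, to the vanishing $H^1(\wts,\pi^*\wedge^aM_L(bH-E))=0$ — you through the explicit factorization $e=e_1\circ\rho$ of the Leray edge map, the paper through the K\"unneth computation of $H^1(Z,(\wedge^aM_L\tensor L^b)\boxtimes\mathcal{O}_X)$ using the identical $N_{2,p}$ input $H^1(X,\wedge^aM_L\tensor L^b)=0$ — and then both run the same Koszul induction on $\ses{\pi^*\wedge^{a+1}M_L(bH-E)}{\wedge^{a+1}\Gamma\tensor\mathcal{O}(bH-E)}{\pi^*\wedge^aM_L((b+1)H-E)}$ using condition (2) of $N_p^{\Sigma}$. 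Your bookkeeping landing on $H^{b-1}(\wts,\pi^*\wedge^{a-1+(b-1)}M_L(2H-E))$ is the correct form (the paper's displayed $H^{b-2}$ is an off-by-one slip, as its exterior power $a+b-2=a-1+(b-1)$ matches cohomological degree $i=b-1$ in the hypothesis).
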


\begin{proof}
We use Proposition~\ref{prop: wts trans}.  From the sequence on $\wts$
$$\ses{\pi^*\wedge^aM_L(bH-E)}{\pi^*\wedge^aM_L(bH)}{\pi^*\wedge^aM_L(bH)\tensor\mathcal{O}_Z}$$
we know \begin{eqnarray*}
H^1(Z,\pi^*\wedge^aM_L(bH)\tensor\mathcal{O}_{Z})&=&H^1\left(Z,\left(\wedge^aM_L\tensor L^b\right)\boxtimes \mathcal{O}_X\right)\\
&=&H^1\left(X\times X,\left(\wedge^aM_L\tensor L^b\right)\boxtimes \mathcal{O}_X\right)\\
&=&H^1(X,\mathcal{O}_X)\tensor H^0(X,\wedge^aM_L\tensor L^b).
\end{eqnarray*}
The first equality follows as the restriction of $\pi^*\wedge^aM_L(bH)$ to $Z$ is $\wedge^aM_L(bH) \boxtimes \mathcal{O}_X$, the second is standard, and for the third we use the K\"unneth formula together with the fact that $h^1(X, \wedge^aM_L \otimes L^b)=0$ as $X$ satisfies $N_{2,p}$. 

Thus $$h^1(\Sigma,\wedge^aM_L(b))=\operatorname{Rank}\left(H^1(\wts,\pi^*\wedge^aM_L(bH-E))\rightarrow H^1(\wts,\pi^*\wedge^aM_L(bH))\right)$$
and so by Proposition~\ref{prop: wts trans} it is enough to show that $H^1(\wts,\pi^*\wedge^aM_L\tensor\mathcal{O}(bH-E))=0$ for $2\leq a\leq p+1$, $b\geq2$.

From the sequence 
$$\ses{\pi^*\wedge^{a+1}M_L\tensor\mathcal{O}(bH-E)}{\wedge^{a+1}\Gamma\tensor\mathcal{O}(bH-E)}{\pi^*\wedge^aM_L\tensor\mathcal{O}((b+1)H-E)}$$
and the fact that $H^i(\wts,\mathcal{O}(bH-E))=0$, we see that $H^1(\wts,\pi^*\wedge^{a}M_L\tensor\mathcal{O}(bH-E))=H^{b-2}(\wts,\pi^*\wedge^{a+b-2}M_L\tensor\mathcal{O}(2H-E))$ for $b\geq2$.




\qed
\end{proof}

\begin{lemma}\label{downtohilb}
Let $X$ be a smooth variety embedded by a $3$-very ample line bundle $L$ satisfying $N_{2,2}$ and consider the morphism $\varphi:\wts\rightarrow \H\subset\P^s$ induced by the linear system $|2H-E|$.  Then $\varphi_*\wedge^aM_L=\wedge^aM_{\mathcal{E_L}}$, and hence $H^i(\wts,\pi^*\wedge^aM_L\tensor\mathcal{O}(2H-E))=H^i(\H,\wedge^aM_{\E_L}\tensor \mathcal{O}_{\H}(1))$.
\end{lemma}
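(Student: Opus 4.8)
The plan is to realize $\wts$ as the projectivization $\P(\E_L)$ of the rank-$2$ bundle $\E_L=\varphi_*\mathcal{O}_{\wts}(H)$ over $\H$, and to compare the two tautological sequences built from the common space of linear sections. Since $\Sigma\subset\P^n$ is linearly normal and $\pi_*\mathcal{O}_{\wts}=\mathcal{O}_{\Sigma}$, one has $\Gamma(\H,\E_L)=\Gamma(\wts,\mathcal{O}(H))=\Gamma(\P^n,\mathcal{O}(1))=:V$; I write $M_{\E_L}=\ker(V\tensor\mathcal{O}_{\H}\to\E_L)$ for the resulting bundle on $\H$ and recall that $\pi^*M_L=\ker(V\tensor\mathcal{O}_{\wts}\to\mathcal{O}_{\wts}(H))$. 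Throughout, $\wedge^aM_L$ on $\wts$ means $\pi^*\wedge^aM_L$, as in the ``hence'' clause.

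First I would produce the comparison sequence on $\wts$. The adjunction counit $\varphi^*\E_L\to\mathcal{O}_{\wts}(H)$ is surjective (the sheaf is relatively globally generated), and its kernel is computed by the relative tautological (Euler) sequence of the $\P^1$-bundle: using $\omega_{\wts/\H}=\varphi^*\wedge^2\E_L(-2H)$ as in Lemma~\ref{det}, the kernel is $\omega_{\wts/\H}(H)=\mathcal{O}_{\wts}(-H)\tensor\varphi^*\wedge^2\E_L$. Because the evaluation $V\tensor\mathcal{O}_{\wts}\to\mathcal{O}_{\wts}(H)$ factors through this counit, the inclusion $\ker(V\tensor\mathcal{O}_{\wts}\to\varphi^*\E_L)\subseteq\ker(V\tensor\mathcal{O}_{\wts}\to\mathcal{O}_{\wts}(H))$ and a snake-lemma argument yield
$$\ses{\varphi^*M_{\E_L}}{\pi^*M_L}{\mathcal{O}_{\wts}(-H)\tensor\varphi^*\wedge^2\E_L}.$$
Deriving this clean sequence — in particular checking that the two evaluation maps are genuinely compatible and correctly identifying the quotient line bundle — is the step I expect to require the most care.

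Next I would take $a$-th exterior powers. Since the quotient above is a line bundle, the standard filtration gives
$$\ses{\varphi^*\wedge^aM_{\E_L}}{\pi^*\wedge^aM_L}{\varphi^*(\wedge^{a-1}M_{\E_L}\tensor\wedge^2\E_L)\tensor\mathcal{O}_{\wts}(-H)}.$$
I then apply $\varphi_*$. By the projection formula together with $\varphi_*\mathcal{O}_{\wts}=\mathcal{O}_{\H}$ and $R^1\varphi_*\mathcal{O}_{\wts}=0$, the left-hand term pushes to $\wedge^aM_{\E_L}$ with no higher direct image; since $R^j\varphi_*\mathcal{O}_{\wts}(-H)=0$ for $j=0,1$ (fiberwise $\mathcal{O}(-1)$ is acyclic), the right-hand term has vanishing $\varphi_*$ and $R^1\varphi_*$. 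The long exact sequence of direct images then gives simultaneously $\varphi_*\pi^*\wedge^aM_L\cong\wedge^aM_{\E_L}$ and $R^1\varphi_*\pi^*\wedge^aM_L=0$.

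Finally, for the cohomology identity I would use $\mathcal{O}_{\wts}(2H-E)=\varphi^*\mathcal{O}_{\H}(1)$. The projection formula turns $\varphi_*$ of $\pi^*\wedge^aM_L\tensor\mathcal{O}_{\wts}(2H-E)$ into $\wedge^aM_{\E_L}\tensor\mathcal{O}_{\H}(1)$, while the vanishing $R^1\varphi_*\pi^*\wedge^aM_L=0$ forces the Leray spectral sequence for $\varphi$ to degenerate, giving $H^i(\wts,\pi^*\wedge^aM_L\tensor\mathcal{O}(2H-E))=H^i(\H,\wedge^aM_{\E_L}\tensor\mathcal{O}_{\H}(1))$ for all $i$, as claimed.
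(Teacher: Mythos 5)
Your proposal is correct and takes essentially the same route as the paper: both construct the comparison sequence $0\to\varphi^*M_{\E_L}\to\pi^*M_L\to K\to0$ from the compatible evaluation maps, take exterior powers using that the quotient $K$ is a line bundle, and conclude via pushforward, the projection formula, and Leray with $\mathcal{O}_{\wts}(2H-E)=\varphi^*\mathcal{O}_{\H}(1)$. The only (harmless) difference is that you identify $K$ explicitly as $\varphi^*\wedge^2\E_L\tensor\mathcal{O}_{\wts}(-H)$ via the relative tautological sequence, whereas the paper gets $R^i\varphi_*K=0$ directly from the column $0\to K\to\varphi^*\E_L\to\mathcal{O}_{\wts}(H)\to0$, since $\varphi_*$ carries the counit to the identity on $\E_L$.
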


\begin{proof}
Consider the diagram on $\wts$:
\begin{center}
{\begin{minipage}{1.5in}
\diagram
 &  &  & 0\dto & \\
 & 0\dto & 0\dto & K\dto & \\
0\rto &  \varphi^*M_{\E_L}\dto\rto & \Gamma(\H,\E_L)\otimes \mathcal{O}_{\wts}\dto\rto & \varphi^*\E_L\dto\rto & 0 \\
0\rto &  \pi^*M_{L}\dto\rto & \Gamma(X,L)\otimes \mathcal{O}_{\wts} \dto\rto & \pi^*L\dto\rto & 0 \\
 &  K\dto & 0 & 0 &  \\
 & 0 &  &  &
\enddiagram
\end{minipage}}
\end{center}
The vertical map in the middle is surjective as we have $\Gamma(\H, \E_L) = \Gamma(\wts,\mathcal{O}(H)) = \Gamma(X\times X,L\boxtimes\mathcal{O}) = \Gamma(X, L)$.  Therefore, surjectivity of the lower right horizontal map and commutativity of the diagram show that the righthand vertical map is surjective.  

Note that $R^i\varphi_*\varphi^*\E_L = \E_L \otimes R^i \varphi_*\mathcal{O}_{\wts}$ by the projection formula and that the higher direct image sheaves $R^i \varphi_*\mathcal{O}_{\wts}$ vanish as $\wts$ is a $\PP^1$-bundle over $\H.$ For the higher direct images, we have $R^i \varphi_*\pi^*L=0$ as the restriction of $L$ to a fiber of $\varphi$ is $\mathcal{O}(1)$ and hence the cohomology along the fibers vanishes.  From the rightmost column, we see $R^i\varphi_*K=0$. From the leftmost column, we have the sequence
$$\ses{\varphi^*\wedge^aM_{\E_L}}{\pi^*\wedge^aM_{L}}{\varphi^*\wedge^{a-1}M_{\E_L}\tensor K}$$
but as $R^i\varphi_*\left(K\tensor\varphi^*\wedge^{a-1}M_{\E_L}\right)=R^i\varphi_*K\tensor\wedge^{a-1}M_{\E_L}=0$, we have $\varphi_*\wedge^aM_L=\wedge^aM_{\mathcal{E}_L}$.
\qed
\end{proof}

Combining Proposition~\ref{prop: wts van} with Lemma~\ref{downtohilb} yields:

\begin{cor}\label{onhilb}
Let $X$ be a smooth variety embedded by a line bundle $L$ satisfying $N_{p}^{\Sigma}$ with $H^i(X,L^k)=0$ for $i,k\geq1$.
Then $\Sigma$ satisfies $\operatorname{N}_{3,p}$ if $$H^i(\H,\wedge^{a-1+i}M_{\E_L}\tensor\mathcal{O}(1))=0$$ for $2\leq a\leq p+1$, $i\geq1$.\qed
\end{cor}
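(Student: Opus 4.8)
The plan is to deduce this directly by feeding the conclusion of Lemma~\ref{downtohilb} into the sufficient condition already recorded in Proposition~\ref{prop: wts van}; no new geometric input is required, since the statement is explicitly advertised as a combination of the two preceding results.

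First I would invoke Proposition~\ref{prop: wts van}. Its hypotheses---that $L$ satisfies $\NS$ and that $H^i(X,L^k)=0$ for $i,k\geq 1$---are exactly the standing hypotheses of the corollary, so it applies verbatim and reduces the claim that $\Sigma$ satisfies $\operatorname{N}_{3,p}$ to the vanishing
$$H^i(\wts,\pi^*\wedge^{a-1+i}M_L\tensor\mathcal{O}(2H-E))=0, \qquad 2\leq a\leq p+1,\ i\geq 1.$$

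Second I would rewrite each of these groups as cohomology on $\H$ by means of Lemma~\ref{downtohilb}. That lemma is licensed here because $\NS$ forces the embedding to be $3$-very ample and to satisfy $N_{2,2}$ (the latter since $N_{2,p}$ implies $N_{2,2}$ in the relevant range $p\geq 2$). Substituting $a\mapsto a-1+i$ into the identity $H^i(\wts,\pi^*\wedge^{a}M_L\tensor\mathcal{O}(2H-E))=H^i(\H,\wedge^{a}M_{\E_L}\tensor\mathcal{O}_{\H}(1))$ of Lemma~\ref{downtohilb} converts the required vanishing into
$$H^i(\H,\wedge^{a-1+i}M_{\E_L}\tensor\mathcal{O}_{\H}(1))=0, \qquad 2\leq a\leq p+1,\ i\geq 1,$$
which is precisely the hypothesis imposed in the corollary. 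Hence $\Sigma$ satisfies $\operatorname{N}_{3,p}$.

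There is no genuine obstacle: all the real content was already discharged in Proposition~\ref{prop: wts van} (the $5$-term spectral-sequence reduction to $\wts$, together with the K\"unneth computation identifying the image of the edge map) and in Lemma~\ref{downtohilb} (the pushforward computation $\varphi_*\wedge^aM_L=\wedge^aM_{\E_L}$). The only point demanding care is bookkeeping---applying the index shift $a\mapsto a-1+i$ consistently and confirming that $\NS$ indeed supplies the $3$-very ampleness and $N_{2,2}$ needed to invoke Lemma~\ref{downtohilb}---so the proof is essentially a one-line substitution, which is why the corollary is stated with only a \qed.
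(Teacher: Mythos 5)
Your proof is correct and is exactly the paper's argument: the paper states Corollary~\ref{onhilb} with only a \qed{} immediately after the sentence ``Combining Proposition~\ref{prop: wts van} with Lemma~\ref{downtohilb} yields,'' so the intended proof is precisely your substitution $a\mapsto a-1+i$ in the identity of Lemma~\ref{downtohilb}, applied to the sufficient condition of Proposition~\ref{prop: wts van}. Your bookkeeping, including the check that $N_{p}^{\Sigma}$ supplies the $3$-very ampleness and $N_{2,2}$ needed to invoke the lemma, matches what the paper leaves implicit.
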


\begin{thm}\label{forgeneral}
Let $X^d$ be a smooth variety embedded by a line bundle $L$ satisfying $N_{p+2d}^{\Sigma}$ with $H^i(X,L^k)=0$ for $i,k\geq1$.  Then $\Sigma$ satisfies $\operatorname{N}_{3,p}$
\end{thm}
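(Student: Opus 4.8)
The plan is to feed the statement into the reductions already established and finish with a single Koszul-resolution argument. By Corollary~\ref{onhilb} it suffices to prove
\[
H^i(\H,\wedge^{a-1+i}M_{\E_L}\tensor\mathcal{O}_{\H}(1))=0\qquad\text{for }2\leq a\leq p+1,\ i\geq1 .
\]
The crucial first observation is purely dimensional: since $\dim\H=2d$, the group above vanishes automatically for $i>2d$, so we only ever need $1\leq i\leq 2d$, and hence the exterior power $m:=a-1+i$ never exceeds $(p+1)-1+2d=p+2d$. This is exactly why the hypothesis is stated as $N^{\Sigma}_{p+2d}$: the argument will call on the linear syzygies of $X$ through step $p+2d$, which is the content of $N_{2,p+2d}$ (part (1) of $N^{\Sigma}_{p+2d}$), together with the single-$E$ vanishing of part (2).

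Next I would transport the problem to $\wts$, where the defining line bundle $\mathcal{O}(2H-E)$ makes condition (2) of $N^{\Sigma}_{p+2d}$ directly applicable. By Lemma~\ref{downtohilb} the target equals $H^i(\wts,\pi^*\wedge^m M_L\tensor\mathcal{O}(2H-E))$. Writing $V=\Gamma(X,L)$ and resolving $\wedge^m M_L$ on $X$ by its (always exact) Koszul complex
\[
0\to\wedge^m M_L\to\wedge^m V\tensor\mathcal{O}_X\to\wedge^{m-1}V\tensor L\to\cdots\to V\tensor L^{m-1}\to L^m\to0,
\]
pulling back along $\pi$ (so that $\pi^*L=\mathcal{O}_{\wts}(H)$) and twisting by $\mathcal{O}(2H-E)$, I obtain a resolution of $\pi^*\wedge^m M_L\tensor\mathcal{O}(2H-E)$ whose terms are $\wedge^{m-j}V\tensor\mathcal{O}_{\wts}((j+2)H-E)$ for $0\leq j\leq m$. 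Each such term involves only a single copy of $E$ and has twist $j+2\geq1$, so condition (2) of $N^{\Sigma}_{p+2d}$ gives $H^q(\wts,\mathcal{O}((j+2)H-E))=0$ for all $q\geq1$.

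Consequently the hypercohomology spectral sequence of this resolution collapses onto its $q=0$ row, and $H^i(\wts,\pi^*\wedge^m M_L\tensor\mathcal{O}(2H-E))$ is computed as the $i$-th cohomology of the complex of global sections $\wedge^{m-j}V\tensor H^0(\wts,\mathcal{O}((j+2)H-E))$. It then remains to show that this complex is exact in positive degrees whenever $m\leq p+2d$.

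The hard part will be precisely this last identification: recognizing, via Lemma~\ref{downtohilb} and the identity $\varphi_*\wedge^a M_L=\wedge^a M_{\E_L}$ (with $H^i(X,L^k)=0$ pinning down the relevant spaces of global sections), that the collapsed complex is the Koszul complex computing the linear syzygies of $L$, so that its exactness through degree $m\leq p+2d$ is exactly the statement $N_{2,p+2d}$ of part (1). Two pieces of bookkeeping accompany this. First, one must keep the two indices coupled as $m=a-1+i$ with $i\leq 2d$, so that the syzygy degree never leaves the range covered by $N_{2,p+2d}$; this is where the dimensional bound $\dim\H=2d$ does its real work. Second, one must check that the surviving edge map of the spectral sequence is the injection required to invoke Proposition~\ref{prop: wts trans}. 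Note that condition (2) enters in exactly the single-$E$ form in which it is stated, so no strengthening of the hypotheses (for instance to the $X\times X$ vanishing of Proposition~\ref{vanishonsec}) is needed.
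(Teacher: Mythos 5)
Your front end is sound: the reduction via Corollary~\ref{onhilb}, the observation that $\dim\H=2d$ bounds $i$ and hence couples $m=a-1+i\leq p+2d$ (a point the paper uses only implicitly, and which does explain the shape of the hypothesis $N^{\Sigma}_{p+2d}$), and the Koszul collapse itself: since $H^q(\wts,\O(bH-E))=0$ for $q,b\geq1$, the twisted pullback of the Koszul resolution does compute $H^i(\wts,\pi^*\wedge^mM_L\tensor\O(2H-E))$ as the $i$-th cohomology of the section complex with terms $\wedge^{m-j}V\tensor H^0(\wts,\O((j+2)H-E))$. But note this collapse is the same degree-shifting already carried out in Proposition~\ref{prop: wts van} (the short exact sequences used there are exactly the splicings of your resolution), so at this point you have only reformulated the target. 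The genuine gap is the final identification, which you yourself flag as the hard part: the section complex is \emph{not} the Koszul complex computing the linear syzygies of $X\subset\P^n$. Since $\pi_*\O_{\wts}(-E)=\I_{X/\Sigma}$, its terms are $\wedge^{m-j}V\tensor H^0(\Sigma,\I_{X/\Sigma}(j+2))$, and these differ from the graded pieces $(I_X)_{j+2}$ of the ideal of $X$ via
$$0\rightarrow H^0(\I_{\Sigma}(b))\rightarrow H^0(\I_X(b))\rightarrow H^0(\Sigma,\I_{X/\Sigma}(b))\rightarrow H^1(\P^n,\I_{\Sigma}(b))\rightarrow\cdots$$
so the exactness you need is governed not by $N_{2,p+2d}$ alone but by the equations, syzygies, and projective normality of $\Sigma$ itself; chasing the strand through $0\to\Gamma_*\I_{\Sigma}\to\Gamma_*\I_X\to\Gamma_*\I_{X/\Sigma}\to0$ shows that the vanishing at spot $i$ encodes precisely the linear-strand Tor vanishing for $I_{\Sigma}$ that the theorem asserts. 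Since the paper deduces projective normality of $\Sigma$ (Theorem~\ref{projnor}) \emph{from} Theorem~\ref{forgeneral}, importing it here is circular: your last step in effect restates the conclusion and declares it equal to the hypothesis.

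What the paper does at exactly this juncture --- and what your sketch has no substitute for --- is to abandon the ambient Koszul complex and pass to the product: it compares $\wts$ and $\H$ with $Z\cong\operatorname{Bl}_{\Delta}(X\times X)$ via the double cover $d$ and the splitting of $d_*$, so that restriction turns $\pi^*\wedge^aM_L\tensor\O(H)$ into $(\wedge^aM_L\tensor L)\boxtimes\O_X$; then the K\"unneth formula together with $H^{j}(X,\wedge^aM_L\tensor L)=0$ for $j\geq1$ (this is where $N_{2,p+2d}$ and $H^i(X,L^k)=0$ actually enter) and an $E_2^{0,i}=E_{\infty}^{0,i}$ spectral sequence comparison for $\pi$ give $H^i(\H,\wedge^{a-1}M_{\E}(1))=0$ for $i\geq1$ and $a\leq p+2d+1$, without ever touching the unknown ideal of $\Sigma$. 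Some mechanism of this kind, converting the hypothesis on $X$ into vanishing on $\H$ while bypassing $\I_{\Sigma}$, is indispensable; as written, your proposal is missing it.
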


\begin{proof}
Pushing the sequence 
$$\ses{d^*\wedge^aM_{\E}\tensor (L\boxtimes\mathcal{O})}{\wedge^aM_{L\boxtimes\mathcal{O}}\tensor (L\boxtimes\mathcal{O})}{d^*\wedge^{a-1}M_{\E}\tensor(L\boxtimes L(-E_{\Delta}))}$$
down to $\H$ yields
$$\ses{\wedge^2M_{\E}\tensor\E}{\left(\varphi_*\pi^*\wedge^aM_L\tensor L\right)\oplus\left(\wedge^{a-1}M_{\E}(1)\right)}{\left(\wedge^{a-1}M_{\E}\tensor\wedge^2\E\right)\oplus\left(\wedge^{a-1}M_{\E}(1)\right)}$$
where the non-trivial part of the sequence comes from twisting the diagram in Lemma~\ref{downtohilb} by $\pi^*L$ and pushing down to $\H$.

From the sequence on $\wts$
$$\ses{\pi^*\wedge^aM_L\tensor\mathcal{O}_{\wts}(H-E)}{\wedge^a\Gamma(\Sigma,\mathcal{O}(1))\tensor\mathcal{O}_{\wts}(H-E)}{\pi^*\wedge^aM_L\tensor\mathcal{O}_{\wts}(2H-E)}$$
and noting that the restriction of $\mathcal{O}_{\wts}(H-E)$ to a fiber of the $\P^1$-bundle $\varphi:\wts\rightarrow\H$ is $\mathcal{O}(-1)$, we immediately see that: 
\begin{itemize}
	\item $\varphi_*\left[\pi^*\wedge^aM_L\tensor\mathcal{O}_{\wts}(H-E)\right]=0$
	\item $R^1\varphi_*\left[\pi^*\wedge^aM_L\tensor\mathcal{O}_{\wts}(H-E)\right]=\wedge^{a-1}M_{\E}(1)$
	\item $R^i\varphi_*\left[\pi^*\wedge^aM_L\tensor\mathcal{O}_{\wts}(H-E)\right]=0$
\end{itemize}
for $i\geq2$.

Putting these together, consider the sequence on $\wts$
$$\ses{\pi^*\wedge^aM_L\tensor\mathcal{O}_{\wts}(H-E)}{\pi^*\wedge^aM_L\tensor\mathcal{O}_{\wts}(H)}{\wedge^aM_{L\boxtimes\mathcal{O}}\tensor (L\boxtimes\mathcal{O})}$$
Applying $\varphi_*$ yields
$$\begin{array}{llclcll} 
0&\rightarrow & 0&\rightarrow &\varphi_*\pi^*\wedge^aM_L\tensor\mathcal{O}_{\wts}(H)&\rightarrow &(\pi^*\wedge^aM_L\tensor\mathcal{O}_{\wts}(H))\oplus\left(\wedge^{a-1}M_{\E}(1)\right) \\
&\rightarrow & \wedge^{a-1}M_{\E}(1) &\rightarrow & 0  & &
\end{array}$$
By the assumption that $X$ satisfies $N_{p+2d}$, we know that $H^i(Z,\wedge^aM_{L\boxtimes\mathcal{O}}\tensor (L\boxtimes\mathcal{O}))=H^0(X,\wedge^aM_L\tensor L)\tensor H^i(X,\mathcal{O}_X)$ for $0\leq a\leq p+2d+1$.  However, for $i\geq1$ this is precisely $H^0(\Sigma,R^i\pi_*\pi^*\wedge^aM_L\tensor\mathcal{O}_{\wts}(H))=E_2^{0,i}$.  It is straightforward to check that $E_2^{0,i}=E_{\infty}^{0,i}$, and thus we have an injection of $H^i$ into $E_{\infty}^{0,i}$; however, $E_{\infty}^{0,i}$ is a quotient of $H^i$, hence this is an isomorphism.

Thus we have $H^i(\wts,\pi^*\wedge^aM_L\tensor\mathcal{O}_{\wts}(H))\cong H^i(Z,\wedge^aM_{L\boxtimes\mathcal{O}}\tensor (L\boxtimes\mathcal{O}))$ for $i\geq1$ and $0\leq a\leq p+2d+1$.  In particular, we have $H^i(\H,\wedge^{a-1}M_{\E}(1))=0$ for $i\geq1$ and $0\leq a\leq p+2d+1$.  Together with Corollary~\ref{onhilb} this completes the proof.
\qed
\end{proof}

As above, we have:
\begin{cor}\label{list2}
In all the examples of Remark~\ref{list}, $\Sigma$ satisfies $N_{3,p-2d}$.
\qed
\end{cor}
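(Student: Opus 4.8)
The plan is to obtain the statement as a single application of Theorem~\ref{forgeneral}, with the role of $p$ there played by $p-2d$. Recall that Theorem~\ref{forgeneral} imposes two hypotheses on the embedding line bundle $L$: first, that $L$ satisfy $N_{(p-2d)+2d}^{\Sigma}=N_p^{\Sigma}$; and second, the auxiliary vanishing $H^i(X,L^k)=0$ for all $i,k\geq1$. The first hypothesis is exactly the conclusion recorded for each family in Corollary~\ref{list}, so nothing further is needed there, and the $p$ appearing in the present corollary is the same $p$ as in Corollary~\ref{list}. Thus the entire content of the proof is to check, family by family, that the auxiliary vanishing $H^i(X,L^k)=0$ holds; granting this, Theorem~\ref{forgeneral} immediately yields that $\Sigma$ satisfies $N_{3,p-2d}$.

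To carry out the verification I would sort the eight families by the vanishing theorem that applies to the relevant twists. In families (3) and (4) the embedding bundle is of adjoint or anticanonical type: writing $L^k=K_X\tensor N_k$ one checks, using that $K_X\tensor M$ (respectively $-K_X$) is ample, that $N_k$ is ample for every $k\geq1$, so the vanishing is Kodaira vanishing. In family (2) the bundle is a sufficiently high power $A^m$ of an ample bundle, and after choosing $m$ large enough that already $H^i(X,A^m)=0$ for $i>0$, the vanishing for every power follows from Serre vanishing. In families (5)--(8) every positive power of the embedding bundle is ample, and on the varieties in question the higher cohomology of an ample (indeed nef) line bundle vanishes: by the index-zero property on abelian varieties, by Demazure vanishing on smooth projective toric varieties, by Bott--Kempf vanishing on the homogeneous spaces $G/P$, and by the K\"unneth formula together with the cohomology of $\P^n$ on products of projective spaces.

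The one case warranting slightly more care is the curve family (1), where I must produce $H^1(X,L^k)=0$ for every $k\geq1$ rather than only for $k=1$. On a curve the only possibly nonzero higher cohomology is $H^1$. For $k=1$ the vanishing is the non-speciality assumption on $L$; for $k\geq2$ I would observe that $3$-very ampleness forces $h^0(L)\geq4$, so Riemann--Roch applied to the non-special bundle $L$ gives $\deg L\geq g+3$, whence $\deg L^k\geq2(g+3)>2g-2$ and $L^k$ is again non-special, giving $H^1(X,L^k)=0$. I do not expect any genuine obstacle in this argument: the only points requiring attention are aligning the index shift $p\mapsto p-2d$ with the hypotheses of Theorem~\ref{forgeneral}, and confirming in each family that the auxiliary vanishing persists for all powers of $L$ and not merely for $L$ itself. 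Assembling these observations and feeding them into Theorem~\ref{forgeneral} completes the proof.
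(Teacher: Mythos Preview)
Your proposal is correct and follows the same approach as the paper: the corollary is an immediate application of Theorem~\ref{forgeneral} with the index shift $p\mapsto p-2d$, using that Corollary~\ref{list} supplies $N_p^{\Sigma}$. The paper records this as a bare ``\qed'', leaving the auxiliary vanishing $H^i(X,L^k)=0$ implicit; your family-by-family verification of that vanishing (Kodaira for adjoint/anticanonical types, Serre for large powers, and the standard vanishing theorems on abelian, toric, homogeneous, and product-of-projective-space varieties, together with the Riemann--Roch argument for curves) is a correct and more explicit rendering of what the paper takes for granted.
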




\begin{ex}\label{pn}
Let $X^d_k=v_k(\P^d)\subset\P^N$, $k\geq3$.  We know by \cite{birkenhake} that $X_k^2$ satisfies $N_{3k-3}$, and hence by Corollary~\ref{list} we have $\Sigma$ satisfies $N_{3,3k-7}$.  

It has been shown \cite{bcr} that $X_k^d$ satisfies $N_{k+1}$ for all $d$, hence $\Sigma$ at least satisfies $N_{3,k-2d}$.
It is conjectured in \cite{op} that for $d\geq2$, $k\geq3$ we have $X_k^d$ satisfies $N_{3k-3}$, which would imply that $\Sigma$ satisfies $N_{3,3k-3-2d}$.  
\end{ex}

\textit{Macaulay 2} \cite{M2} calculations performed by Jessica Sidman show that for $v_3(\P^2)$, $\Sigma$ satisfies $N_{3,4}$ and for $v_4(\P^2)$, $\Sigma$ satisfies $N_{3,7}$.  Together with the known behavior for rational normal curves and the conjecture of \cite{op} mentioned above, this suggests the following:

\begin{conj}
For $d\geq2$, $k\geq3$, the secant variety to $v_k(\P^d)$ satisfies $N_{3,3k-5}$.
\qed
\end{conj}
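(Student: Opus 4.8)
The plan is to feed the conjecture into the reduction machinery already built in the paper and then pinpoint precisely the cohomology that lies beyond its reach. Set $\mathcal{H}=\operatorname{Hilb}^2(\P^d)$, so $\dim\mathcal{H}=2d$, and write $L=\mathcal{O}_{\P^d}(k)$. By Birkenhake \cite{birkenhake} the embedding $v_k(\P^2)$ satisfies $N_{3k-3}$, and by the argument of part~(8) of Corollary~\ref{list} (via \cite{kumar},\cite{wahl2}) it also satisfies the vanishing clause of $\NS$; hence $v_k(\P^2)$ satisfies $N^{\Sigma}_{3k-5}$. Since $H^i(\P^d,\mathcal{O}(kr))=0$ for $i,r\geq1$, the hypotheses of Corollary~\ref{onhilb} are met, and the conjecture (for $d=2$) reduces to
\[
H^i\!\left(\mathcal{H},\,\wedge^{a-1+i}M_{\E_L}\tensor\mathcal{O}(1)\right)=0,\qquad 2\leq a\leq 3k-4,\ i\geq1 .
\]
For $d\geq3$ I would take as input the Ottaviani--Paoletti bound $N_{3k-3}$ \cite{op}; granting it, the identical reasoning gives $N^{\Sigma}_{3k-5}$ and the same reduction on $\operatorname{Hilb}^2(\P^d)$.

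Next I would measure this against Theorem~\ref{forgeneral}. Writing $q=3k-3$, the proof of that theorem extracts from $N_{q}$ the vanishing $H^i(\mathcal{H},\wedge^{j}M_{\E_L}(1))=0$ for all $i\geq1$ and all $0\leq j\leq q$; this is exactly $N_{3,q-2d}$ and leaves a gap of $2d-2$ from the target $p=3k-5=q-2d+2$. Because only $1\leq i\leq 2d$ can contribute, a direct check on the index $j=a-1+i$ shows that every pair $(i,j)$ not already handled satisfies $i\geq3$ and $q<j\leq q+2d-2$: the missing vanishings occupy only the top $2d-2$ cohomological degrees, just past the linear-syzygy threshold $j=q$. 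For $d=2$ this is the explicit three-term list
\[
H^{3}\!\left(\mathcal{H},\wedge^{q+1}M_{\E_L}(1)\right)=H^{4}\!\left(\mathcal{H},\wedge^{q+1}M_{\E_L}(1)\right)=H^{4}\!\left(\mathcal{H},\wedge^{q+2}M_{\E_L}(1)\right)=0
\]
on the fourfold $\operatorname{Hilb}^2(\P^2)$. The entire conjecture is thus concentrated in this short list of boundary groups.

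To attack them I would pass to $Z=\operatorname{Bl}_{\Delta}(\P^d\times\P^d)$, where Borel--Weil--Bott is available. Under $d:Z\to\mathcal{H}$ the bundle $M_{\E_L}$ is controlled by the two sequences of Lemma~\ref{det} and Lemma~\ref{downtohilb}, which tie $d^*\E_L$ to $L\boxtimes\mathcal{O}$ and to $L\boxtimes L(-E_{\Delta})$; so by the projection formula each group above becomes, after pushing $Z\to\P^d\times\P^d$ and applying the K\"unneth formula, a Koszul-type cohomology group of $M_{L}$ on the two $\P^d$ factors. As the whole configuration is $\operatorname{SL}(d+1)$-equivariant, each such group is a representation computable in principle by Borel--Weil--Bott. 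For the three fourfold groups it is cleaner to invoke Serre duality on $\operatorname{Hilb}^2(\P^2)$, rewriting them as $H^0\!\left(\mathcal{H},(\wedge^{j}M_{\E_L})^{\vee}(-1)\tensor\omega_{\mathcal{H}}\right)$ and showing these section spaces vanish directly.

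The hard part is genuinely the boundary powers $q<j\leq q+2d-2$. These sit exactly where the linear syzygies of the Veronese terminate: the failure of $N_{3k-2}$ for $v_k(\P^d)$ is the first non-linear Koszul class, and under the translation above it is precisely the obstruction to the vanishing of $H^{\bullet}(\mathcal{H},\wedge^{q+1}M_{\E_L}(1))$. Consequently these groups are \emph{not} governed by positivity of $L$ alone---this is exactly why Theorem~\ref{forgeneral} halts at $j=q$---and establishing their vanishing demands the precise module structure of the extremal syzygies of $v_k(\P^d)$ rather than merely the bound $N_{3k-3}$. This is the phenomenon flagged in Remark~\ref{gaussianetal}: the relevant classes are measured by the higher Gauss--Wahl maps of \cite{wahl}, and the required vanishing amounts to maximal rank of those maps. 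For $d\geq3$ there is the further difficulty that the input $N_{3k-3}$ is itself only conjectural, so an unconditional proof would have to establish the Ottaviani--Paoletti bound along the way.
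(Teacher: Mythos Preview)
The statement is labelled a \emph{Conjecture} in the paper and is not proved there; it is motivated only by the known behaviour for rational normal curves and by \textit{Macaulay~2} computations of Jessica Sidman for $v_3(\P^2)$ and $v_4(\P^2)$. There is therefore no proof in the paper against which to compare your proposal.

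Your write-up is not a proof either, and you are candid about this. You correctly run the conjecture through the paper's reduction machinery (Corollary~\ref{onhilb} and Theorem~\ref{forgeneral}), accurately locate the gap between what Theorem~\ref{forgeneral} delivers from $N_{3k-3}$ (namely $N_{3,3k-3-2d}$) and what the conjecture asserts ($N_{3,3k-5}$), and isolate the precise missing cohomology groups on $\operatorname{Hilb}^2(\P^d)$. Your index bookkeeping is right, including the observation that only $i\geq3$ contributes and, for $d=2$, the explicit three-term list on the fourfold $\operatorname{Hilb}^2(\P^2)$. But the proposal stops exactly where the difficulty begins: the vanishing of those boundary groups is left as something to be established ``in principle'' by Borel--Weil--Bott or Serre duality, with no computation actually carried out. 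As you yourself note, these groups sit just past the linear-syzygy threshold for $v_k(\P^d)$ and are not controlled by positivity alone, so an appeal to equivariance or to the Gauss--Wahl interpretation is a direction, not an argument. For $d\geq3$ the whole scheme is further conditional on the Ottaviani--Paoletti conjecture. What you have written is a sound research plan for attacking the conjecture, consistent with the paper's framework, but it is not a proof---and the paper offers nothing further on this point.
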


\section{Acknowledgments}
This project grew out of work done together with Jessica Sidman, and benefited greatly from her insight and input, as well as from her comments regarding a preliminary draft of this work.  I would also like to thank Lisa DeMeyer, Hal Schenck, Greg Smith, and Jonathan Wahl for helpful discussion and comments.

\end{document}